\renewcommand{\MR}[1]{}
\theoremstyle{plain}
\newtheorem{thm}{Theorem}[section]
\newtheorem{lem}[thm]{Lemma}
\newtheorem{pro}[thm]{Proposition}
\newtheorem{cor}[thm]{Corollary}
\newtheorem{thm*}{Theorem}
\theoremstyle{definition}
\newtheorem{dfn}[thm]{Definition}
\newtheorem{exa}[thm]{Example}
\newtheorem{rem}[thm]{Remark}
\numberwithin{equation}{section}
\DeclareMathOperator{\Hom}{Hom}
\DeclareMathOperator{\Aut}{Aut}
\DeclareMathOperator{\End}{End}
\DeclareMathOperator{\Ext}{Ext}
\DeclareMathOperator{\thick}{thick}
\DeclareMathOperator{\modu}{mod}
\newcommand{\lmod}[1]{\text{$#1$-$\modu$}}
\newcommand{\proje}[1]{\text{$#1$-$\mathrm{proj}$}}
\newcommand{\injec}[1]{\text{$#1$-$\mathrm{inj}$}}
\newcommand{\isoto}{\xrightarrow{\sim}}
\DeclareMathOperator{\id}{id}
\DeclareMathOperator{\idim}{idim}
\DeclareMathOperator{\pdim}{pdim}
\newcommand{\inj}{\hookrightarrow}
\newcommand{\oTo}{\xymatrix{ \ar@{^{(}->}[r]|{\mathbf{O}}& }} 
\newcommand{\cTo}{\xymatrix{ \ar@{^{(}->}[r]|{\mathbf{|}}& }} 
\newcommand{\coTo}{\xymatrix{ \ar@{^{(}->}[r]|{\mathbf{O}}|{\mathbf{|}}& }} 
\newcommand{\surj}{\twoheadrightarrow }
\newcommand{\op}{\mathrm{op}}
\DeclareMathOperator{\Ker}{ker}
\DeclareMathOperator{\coKer}{coker}
\DeclareMathOperator{\Bild}{im}
\DeclareMathOperator{\add}{add}
\DeclareMathOperator{\gldim}{gldim}
\DeclareMathOperator{\domdim}{domdim}
\newcommand{\kdual}{\mathrm{D}}
\newcommand{\D}{\mathbb{D}}
\newcommand{\K}{\mathbb{K}}
\newcommand{\Z}{\mathbb{Z}}
\newcommand{\mcA}{\mathcal{A}}
\newcommand{\mcC}{\mathcal{C}}
\newcommand{\mcD}{\mathcal{D}}
\newcommand{\mcH}{\mathcal{H}}
\newcommand{\mcT}{\mathcal{T}}
\newcommand{\mcX}{\mathcal{X}}
\newcommand{\mcY}{\mathcal{Y}}
\newcommand{\mcZ}{\mathcal{Z}}
\DeclareMathOperator{\gen}{gen}
\DeclareMathOperator{\cogen}{cogen}
\newcommand{\homot}[2][]{\mathcal{K}^{#1}(#2)}
\newcommand{\dercat}[2][]{\mathcal{D}^{#1}(#2)}
\newcommand{\bound}{\mathrm{b}}
\DeclareMathOperator{\TTF}{TTF}
\renewcommand{\epsilon}{\varepsilon}
\newcommand{\shiftmod}[1]{T_{#1}}
\newcommand{\coshiftmod}[1]{C^{#1}}
\newcommand{\shiftalg}[1]{B_{#1}}
\newcommand{\coshiftalg}[1]{B^{#1}}
\begin{document}

\title[Special tilting modules for algebras with positive dominant dimension]{Special tilting modules\\for algebras with positive dominant dimension}

\author{Matthew Pressland}
\address{Matthew Pressland\\Institut f\"ur Algebra und Zahlentheorie\\Universit\"at Stuttgart\\Pfaffenwaldring~57\\70569 Stuttgart\\Germany}
\email{presslmw@mathematik.uni-stuttgart.de}

\author{Julia Sauter}
\address{Julia Sauter\\Fakult\"at f\"ur Mathematik\\Universit\"at Bielefeld\\Postfach 100 131\\D-33501 Bielefeld\\Germany}
\email{jsauter@math.uni-bielefeld.de}

\subjclass[2010]{16G10, 18E30, 18G05, 18G20}
\keywords{tilting, dominant dimension, higher Auslander--Reiten theory, recollement}

\date{\today}
\dedicatory{Dedicated to Idun Reiten on the occasion of her 75\textsuperscript{th} birthday.}

\begin{abstract}
We study certain special tilting and cotilting modules for an algebra with positive dominant dimension, each of which is generated or cogenerated (and usually both) by projective-injectives. These modules have various interesting properties, for example that their endomorphism algebras always have global dimension at most that of the original algebra. We characterise minimal $d$-Auslander--Gorenstein algebras and $d$-Auslander algebras via the property that these special tilting and cotilting modules coincide. By the Morita--Tachikawa correspondence, any algebra of dominant dimension at least $2$ may be expressed (essentially uniquely) as the endomorphism algebra of a generator-cogenerator for another algebra, and we also study our special tilting and cotilting modules from this point of view, via the theory of recollements and intermediate extension functors.
\end{abstract}
\maketitle


\section{Introduction}

In \cite{CBSa}, Crawley-Boevey and the second author associated to each Auslander algebra a distinguished tilting-cotilting module $T$, with the property that it is both generated and cogenerated by a projective-injective module. In this paper, we study more general instances of tilting modules generated by projective-injectives, and cotilting modules cogenerated by projective-injectives. In contrast to the case of Auslander algebras, we consider here tilting and cotilting modules of arbitrary finite projective or injective dimension.

More precisely, let $\Gamma$ be a finite-dimensional algebra with dominant dimension $d+1$ (see Definition~\ref{dom-dim}). Then for every $1\leq k\leq d$, we may specify a specific `shifted' $k$-tilting module $\shiftmod{k}$ and `coshifted' $k$-cotilting module $\coshiftmod{k}$, 
which are both generated and cogenerated by projective-injectives.
We are also interested in the resulting shifted and coshifted algebras $\shiftalg{k}=\End_\Gamma(\shiftmod{k})^{\op}$ and $\coshiftalg{k}=\End_\Gamma(\coshiftmod{k})^{\op}$. Some of our results will also apply to the degenerate cases of $k=0$ and $k=d+1$.

Despite the relatively simple construction of these modules and algebras, they appear not to have been studied in much detail, particularly for $k>1$, until very recently. Indeed, the only general result we are aware of prior to our work is by Chen and Xi \cite{CX}, who call the $\shiftmod{k}$ `canonical tilting modules' and obtain results on the dominant dimension of the shifted algebras $\shiftalg{k}$.
Some of our results on the single tilting module $\shiftmod{1}$, notably those of Section~\ref{dAG-algebras} on minimal $1$-Auslander--Gorenstein algebras, were obtained independently by Nguyen, Reiten, Todorov and Zhu \cite{NRTZ}.

The primary purpose of this paper is to collect information about the modules $\shiftmod{k}$ relevant to tilting theory; for example, by computing various subcategories determined by the tilting module and used in Miyashita's generalisation of the Brenner--Butler theorem, and by studying recollements involving the tilted algebras $\shiftalg{k}$. We take our lead from the questions answered in \cite{CBSa} for $\shiftmod{1}$ in the case that $\Gamma$ is an Auslander algebra. These considerations are applied in \cite{CBSa} to construct desingularisations for varieties of modules, but the geometric arguments depend crucially on working in a low homological dimension. Nevertheless, we are able to extend most of the homological results of \cite{CBSa} to our much higher level of generality---in some cases, also simplifying the arguments---and we expect these properties to be of independent interest. We note that the geometric statements of \cite{CBSa} can also be extended by generalising in a somewhat different direction, as we explain in \cite{PS2}.

The non-degenerate shifted and coshifted modules are defined when $\Gamma$ has dominant dimension at least $2$, and in this case $\Gamma\cong\End_A(E)^{\op}$ for a generating-cogenerating module $E$ over a finite-dimensional algebra $A$. In fact, assuming for simplicity that all objects are basic, the assignment $(A,E)\mapsto\End_A(E)^{\op}$ induces a bijection
\[\{[A,E]:\text{$E$ a generating-cogenerating $A$-module}\}\isoto\{[\Gamma]:\domdim{\Gamma}\geq2\},\]
with objects considered up to isomorphism on each side\footnote{We say that $(A,E)$ and $(A',E')$ are isomorphic if there is an isomorphism $\varphi\colon A\isoto A'$ such that $\varphi^*E'\cong E$. The reader is warned that $(A,E)\cong(A,E')$ does not imply that $E\cong E'$ as $A$-modules, but only that $E\cong\varphi^*E'$ for some $\varphi\in\Aut(A)$.} \cites{Tach-PJM,Morita71}, a result sometimes called \cites{RingelNotes, GK} the \emph{Morita--Tachikawa correspondence}. In this context, it will often be convenient for us to express results on the special cotilting $\Gamma$-modules in terms of the pair $(A,E)$ on the other side of the correspondence, and as such the following definition will be convenient throughout the paper.

\begin{dfn}
\label{MT-triple}
A \emph{Morita--Tachikawa triple} $(A,E,\Gamma)$ consists of a finite-dimensional algebra $A$, a generating-cogenerating $A$-module $E$, and $\Gamma\cong\End_A(E)^{\op}$.
\end{dfn}

Thus, the set of isoclasses of Morita--Tachikawa triples in which all three objects are basic is the graph of the Morita--Tachikawa correspondence. Given a basic algebra $\Gamma$ of dominant dimension at least $2$, it appears in the (unique up to isomorphism) Morita--Tachikawa triple
\[(A=\End_\Gamma(\Pi)^{\op},E=\kdual\Pi,\Gamma),\]
for $\Pi$ a maximal projective-injective summand of $\Gamma$, and $\kdual$ the usual duality over the base field. 

The structure of the paper is as follows. We give the definitions and preliminary results in Section~\ref{shifted-algebras}, including the observation (Proposition~\ref{gldim-bound}) that $\gldim B \leq \gldim \Gamma$ whenever $B$ is one of the shifted or coshifted algebras of $\Gamma$, and a characterisation (Proposition~\ref{shifted-char}) of the algebras $B$ arising in this way. In Section~\ref{dAG-algebras}, we investigate the modules $\shiftmod{k}$ and $\coshiftmod{k}$ in the context of higher Auslander--Reiten theory, which provides a wealth of examples of algebras with high dominant dimension. The main result of this section is the following.

\begin{thm*}[Theorem~\ref{tilt-cotilt-dAG}]
Let $\Gamma$ be a finite-dimensional non-selfinjective algebra with $\domdim{\Gamma}=d+1$. The following are equivalent:
\begin{itemize}
\item[(i)]$\Gamma$ is a minimal $d$-Auslander--Gorenstein algebra,
\item[(ii)]$\shiftmod{k}=\coshiftmod{d+1-k}$ for all $0\leq k\leq d+1$, and
\item[(iii)]there exists $M\in\lmod{\Gamma}$ that is both a shifted and a coshifted module.
\end{itemize}
Under these conditions, $\Gamma$ is a $d$-Auslander algebra if and only if $\gldim{\Gamma}<\infty$.
\end{thm*}
The definition of a minimal $d$-Auslander--Gorenstein algebra, due to Iyama and Solberg \cite{IS}, may be found below (Definition~\ref{dAG}). This result generalises \cite[Lem.~1.1]{CBSa} for ($1$-)Auslander algebras, and also \cite[Thm.~2.4.12]{NRTZ} by Nguyen, Reiten, Todorov and Zhu, who proved it independently in the case $d=1$.

If $\Pi$ is the maximal projective-injective summand of $\Gamma$, it is a summand of every tilting or cotilting $\Gamma$-module. Thus if $B$ is the endomorphism algebra of such a module, it has an idempotent given by projection onto $\Pi$, yielding a recollement involving the categories $\lmod{B}$ and $\lmod{\End_\Gamma(\Pi)^{\op}}$; note that if $\domdim{\Gamma}\geq2$ then $\End_\Gamma(\Pi)^{\op}$ is the algebra $A$ from the Morita--Tachikawa triple involving $\Gamma$. In Section~\ref{homotopy-cats}, we study these recollements for the shifted and coshifted algebras. In particular, we give in Theorems~\ref{cdescription} and \ref{cdescription*} an explicit formula for the intermediate extension functor in such a recollement; this functor is, by definition, the image of the universal map from the restriction functor's left adjoint to its right adjoint.

To obtain this formula, we show that each shifted and coshifted algebra of $\Gamma$ can be described in terms of its Morita--Tachikawa partner $(A,E)$, as in the following theorem.

\begin{thm*}[Theorem~\ref{HtoB}]
Let $(A,E,\Gamma)$ be a Morita--Tachikawa triple. Then for all $0\leq k\leq\domdim{\Gamma}$, there are isomorphisms
\[\shiftalg{k}\cong\End_{\homot[\bound]{A}}(E_k)^{\op},\qquad\coshiftalg{k}\cong\End_{\homot[\bound]{A}}(E^k)^{\op},\]
where $\shiftalg{k}$ and $\coshiftalg{k}$ are the shifted and coshifted algebras of $\Gamma$, and $E_k$ and $E^k$ are certain bounded complexes of $A$-modules, defined explicitly in Theorem~\ref{HtoB}.
\end{thm*}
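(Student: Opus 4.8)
The plan is to transport everything from $\modu\Gamma$ to complexes over $A$ via the Morita-type equivalence attached to the triple $(A,E,\Gamma)$, and then to compute endomorphism rings inside $\homot[\bound]{A}$.

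First I would set up the comparison functor. As $\Gamma=\End_A(E)^{\op}$ and $E$ is a generator, the bimodule $E$ gives mutually inverse equivalences $\Hom_A(E,-)\colon\add_A E\rightleftarrows\proj\Gamma\colon E\otimes_\Gamma-$, and, as recalled in Section~\ref{shifted-algebras}, since $E$ is also a cogenerator we have $\inje A\subseteq\add_A E$ with $\Hom_A(E,-)$ restricting to an equivalence $\inje A\simeq\add\Pi$ and $E\otimes_\Gamma\Hom_A(E,J)\cong J$ for injective $J$. Applying these functors termwise gives an equivalence $\homot[\bound]{\proj\Gamma}\simeq\homot[\bound]{\add_A E}$, and $\homot[\bound]{\add_A E}$ is a full subcategory of $\homot[\bound]{A}$, because morphisms and null-homotopies are computed by the same $\Hom_A$-groups between the terms in either category.

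Now for the shifted algebras. By construction $\shiftmod{k}$ is a $k$-tilting $\Gamma$-module, so $\pdim_\Gamma\shiftmod{k}\leq k$ and its minimal projective resolution is a bounded complex $P_k^\bullet$ of projective $\Gamma$-modules concentrated in degrees $-k,\dots,0$; this complex is read off from the explicit description of $\shiftmod{k}$ in Section~\ref{shifted-algebras} (its only non-projective indecomposable summand is a cosyzygy of $\Gamma$ with respect to the minimal injective copresentation, whose first $d$ terms are projective-injective). Since $P_k^\bullet$ is a bounded complex of projectives it embeds into $\dercat[\bound]{\Gamma}$, and because $\shiftmod{k}$ is a module this yields ring isomorphisms $\End_\Gamma(\shiftmod{k})\cong\End_{\dercat[\bound]{\Gamma}}(\shiftmod{k})\cong\End_{\homot[\bound]{\proj\Gamma}}(P_k^\bullet)$. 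Transporting $P_k^\bullet$ through the equivalence of the previous paragraph produces $E\otimes_\Gamma P_k^\bullet\in\homot[\bound]{\add_A E}\subseteq\homot[\bound]{A}$; using $E\otimes_\Gamma\Gamma=E$ and that $E\otimes_\Gamma-$ sends $\add\Pi$ to $\inje A$, one checks that this coincides with the explicitly constructed object $E_k$ of Section~\ref{homotopy-cats}, so that $\shiftalg{k}=\End_\Gamma(\shiftmod{k})^{\op}\cong\End_{\homot[\bound]{A}}(E_k)^{\op}$.

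For the coshifted algebras I would dualise. The triple $(A^{\op},\kdual E,\Gamma^{\op})$ is again Morita--Tachikawa, $\domdim\Gamma^{\op}=\domdim\Gamma=d$, and $\kdual$ carries the coshifted $k$-cotilting module of $\Gamma$ to the shifted $k$-tilting module of $\Gamma^{\op}$; hence $\coshiftalg{k}=\End_\Gamma(\coshiftmod{k})^{\op}$ equals the shifted algebra $\shiftalg{k}$ of $\Gamma^{\op}$. Applying the case already proved to $(A^{\op},\kdual E,\Gamma^{\op})$ and then transporting back along the duality $\kdual\colon\homot[\bound]{A^{\op}}\to\homot[\bound]{A}$ (which reverses complexes and interchanges $\End(-)$ with $\End(-)^{\op}$) gives an object $E^k\in\homot[\bound]{A}$ with $\coshiftalg{k}\cong\End_{\homot[\bound]{A}}(E^k)^{\op}$, which one matches with the explicit $E^k$ of Section~\ref{homotopy-cats}. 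The main obstacle is the bookkeeping: extracting the precise minimal (co)resolutions of $\shiftmod{k}$ and $\coshiftmod{k}$ — differentials and degree conventions, not merely the terms — and identifying their images under the Morita equivalence with the objects $E_k$ and $E^k$, while keeping every $\op$ and duality consistent so that the final statements come out over $A$ itself and not over $A^{\op}$.
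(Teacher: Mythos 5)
Your plan is essentially the same as the paper's: transport a minimal (co)resolution of the (co)shifted module through a Yoneda-type equivalence with $\add{E}$ and compute endomorphism rings in $\homot[\bound]{A}$. The paper writes out the coshifted case directly, applying $\kdual\Hom_A(-,E)\colon\add{E}\isoto\injec{\Gamma}$ to the explicit injective resolution of $\coshiftmod{k}$, and declares the shifted case dual; you write out the shifted case directly, applying $E\otimes_\Gamma-\colon\proje{\Gamma}\isoto\add{E}$ to the projective resolution of $\shiftmod{k}$, and reduce the coshifted case to $\Gamma^{\op}$. These two executions are each other's duals, so the underlying idea is identical.

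Two points worth tightening. First, an $\op$-slip in the reduction to $\Gamma^{\op}$: by Remark~\ref{opposite}, $\kdual\coshiftmod{k}$ is indeed the $k$-shifted $\Gamma^{\op}$-module, but since $\kdual$ is contravariant the $k$-shifted algebra of $\Gamma^{\op}$ is
$\End_{\Gamma^{\op}}(\kdual\coshiftmod{k})^{\op}\cong\bigl(\End_\Gamma(\coshiftmod{k})^{\op}\bigr)^{\op}=\End_\Gamma(\coshiftmod{k})=\coshiftalg{k}^{\op}$,
not $\coshiftalg{k}$ as you state. The contravariance of $\kdual\colon\homot[\bound]{A^{\op}}\to\homot[\bound]{A}$ then supplies exactly the compensating $\op$, so your displayed conclusion $\coshiftalg{k}\cong\End_{\homot[\bound]{A}}(E^k)^{\op}$ is still correct, but the chain of identities as written is off by one $\op$. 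Second, to identify $E\otimes_\Gamma P_k^\bullet$ with the minimal injective coresolution appearing in the definition of $E_k$ (rather than merely as an object of $\homot[\bound]{\add{E}}$), you should record that $E\otimes_\Gamma-$ is exact: this holds because $E=\kdual\Pi$ is projective as a right $\Gamma$-module, being the $\K$-dual of the injective left $\Gamma$-module $\Pi$.
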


This result generalises \cite[Prop.~5.5]{CBSa} for the case that $\Gamma$ is an Auslander algebra and $k=1$. The proof we give here is different, and more conceptual.



A $k$-tilting or $k$-cotilting $\Gamma$-module with endomorphism algebra $B$ defines $k+1$ pairs of equivalent subcategories in $\lmod{\Gamma}$ and $\lmod{B}$; in the classical case $k=1$, the two subcategories on each side form a torsion pair. In Section~\ref{subcategories}, we describe these subcategories for the shifted and coshifted modules, often in terms of generation or cogeneration by certain projective or injective modules.

In Section~\ref{int-exts}, we consider again the recollements involving $\lmod{B}$ and $\lmod{A}$, where $B$ is one of the shifted or coshifted algebras of an algebra $\Gamma$ in a Morita--Tachikawa triple $(A,E,\Gamma)$. Recall from general tilting theory that $\shiftalg{k}$, as a tilt of $\Gamma$ by $\shiftmod{k}$, has a preferred cotilting module $\kdual\shiftmod{k}$. Similarly $\coshiftalg{k}$ has the preferred tilting module $\kdual\coshiftmod{k}$. We prove the following, again generalising the results of \cite{CBSa} for the case that $\Gamma$ is the Auslander algebra of $A$.

\begin{thm*}[Theorems~\ref{shifted-intexts}, \ref{coshifted-intexts}]
Let $(A,E,\Gamma)$ be a Morita--Tachikawa triple and $0<k<\domdim{\Gamma}$. Denoting by $c_k$ and $c^k$ the intermediate extension functors in the recollements relating $\lmod{\shiftalg{k}}$ and $\lmod{\coshiftalg{k}}$ respectively with $\lmod{A}$, we have
\[c_k(E)=\kdual\shiftmod{k},\qquad c^k(E)=\kdual\coshiftmod{k}.\]
\end{thm*}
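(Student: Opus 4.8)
The plan is to work entirely on the shifted side (the coshifted side being dual) and to identify the intermediate extension functor $c_k$ via its universal property. Recall that the recollement relating $\lmod{\shiftalg{k}}$ and $\lmod{A}$ arises from the idempotent of $\shiftalg{k} = \End_\Gamma(\shiftmod{k})^{\op}$ given by projection onto the maximal projective-injective summand $\Pi$ of $\shiftmod{k}$ (equivalently of $\Gamma$), with $A = \End_\Gamma(\Pi)^{\op} = \End_{\homot[\bound]{A}}(A)^{\op}$ as in the Morita--Tachikawa triple. Writing $e$ for this idempotent, the restriction functor $\lmod{\shiftalg{k}}\to\lmod{A}$ is $M\mapsto eM = \Hom_\Gamma(\Pi, -)\circ(\text{the }\shiftalg{k}\text{-module structure})$, its left adjoint is $\shiftalg{k}e\otimes_A(-)$ and its right adjoint is $\Hom_A(e\shiftalg{k},-)$; the intermediate extension $c_k$ is the image of the canonical natural transformation between these two. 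First I would compute $c_k$ applied to the $A$-module $E = \kdual\Pi$: since $E$ is a generator-cogenerator and, under the identification of Theorem~\ref{HtoB}, corresponds to an object of $\homot[\bound]{A}$ appearing as a summand of both $E_k$ (the object whose endomorphism algebra is $\shiftalg{k}$) and of $A$ itself in an appropriate sense, I expect $\shiftalg{k}e\otimes_A E$ and $\Hom_A(e\shiftalg{k}, E)$ to be computable directly as $\Hom$-spaces in $\homot[\bound]{A}$, and their images to be $\Gamma$-modules (via the tilting equivalence) that one recognises.

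The key step is to match $c_k(E)$ with $\kdual\shiftmod{k}$ using the tilting dictionary: since $\shiftalg{k}$ is the tilt of $\Gamma$ by the $k$-tilting module $\shiftmod{k}$, general tilting theory gives a derived equivalence under which $\shiftalg{k}$ (as a left module over itself) corresponds to $\shiftmod{k}$, and the preferred cotilting $\shiftalg{k}$-module is $\kdual\shiftmod{k} = \Hom_\Gamma(\shiftmod{k}, \kdual\Gamma)$ concentrated appropriately; one sees $e\cdot\kdual\shiftmod{k}$ (the $A$-part of this cotilting module) is precisely $\Hom_\Gamma(\Pi,\kdual\Gamma) = \kdual\,\Hom_\Gamma(\Gamma,\Pi)^{\op}$-style expression, which should evaluate to $E$ or to $\kdual\Pi = E$ again. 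Thus $\kdual\shiftmod{k}$ is a $\shiftalg{k}$-module $N$ with $eN \cong E$, and it remains to show $N$ is the intermediate extension of $E$, i.e.\ that $N$ has neither nonzero submodules nor nonzero quotients killed by $e$. Here I would use the characterisations of the relevant subcategories from Section~\ref{subcategories}: a $\shiftalg{k}$-module lies in the essential image of $c_k$ exactly when it has no sub- or quotient-module supported on the vertices outside $\Pi$, and the projective-injective-cogeneration/generation properties of $\shiftmod{k}$ (and hence the structure of $\kdual\shiftmod{k}$ over $\shiftalg{k}$) should translate directly into this vanishing. Concretely, $\soc(\kdual\shiftmod{k})$ and $\Top(\kdual\shiftmod{k})$ are controlled by the fact that $\shiftmod{k}$ is generated and cogenerated by $\Pi$ (for $0<k<d$), which forces the socle and top of $\kdual\shiftmod{k}$ to be supported at the $\Pi$-vertices, giving exactly the no-sub/no-quotient condition characterising the intermediate extension.

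An alternative, perhaps cleaner route I would try in parallel: use the explicit formula for $c_k$ from Theorems~\ref{cdescription} and \ref{cdescription*} (already established earlier in the paper), apply it to the distinguished module $E$, and simplify using Theorem~\ref{HtoB} to rewrite everything in $\homot[\bound]{A}$; the output should visibly be $\End_{\homot[\bound]{A}}(E_k)$-dual to the relevant $\Hom$-complex, which is $\kdual\shiftmod{k}$ by the same theorem. The coshifted statement $c^k(E) = \kdual\coshiftmod{k}$ then follows by applying the duality $\kdual\colon\lmod{\Gamma}\to\lmod{\Gamma^{\op}}$, which interchanges shifted and coshifted modules, tilting and cotilting, and intermediate extension functors for the opposite recollement. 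I expect the main obstacle to be the bookkeeping in identifying $e\cdot\kdual\shiftmod{k}$ with $E$ on the nose (as opposed to up to an automorphism of $A$) and, relatedly, checking that the canonical map from the left-adjoint image to the right-adjoint image has image exactly $\kdual\shiftmod{k}$ rather than merely a module with the same $e$-part; this is where the subcategory characterisations of Section~\ref{subcategories} do the real work, and getting those to apply will require carefully tracking which projective-injectives generate and cogenerate $\shiftmod{k}$ in the boundary cases $k=0$ and $k=d$ (excluded here, which is why the hypothesis is $0<k<\domdim\Gamma$).
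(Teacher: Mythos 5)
Your overall strategy matches the paper's: first identify $e_k(\kdual\shiftmod{k})$ with $E$ (Proposition~\ref{rest-canonical-to-E}), then show $\kdual\shiftmod{k}$ lies in $\Bild{c_k}=\gen(P_k)\cap\cogen(I_k)$, so that $\kdual\shiftmod{k}=c_ke_k\kdual\shiftmod{k}=c_kE$. This is exactly how the paper proceeds.

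However, the middle step as you phrase it has a genuine gap. You assert that $\shiftmod{k}$ being generated and cogenerated by $\Pi$ \emph{forces} the socle and top of $\kdual\shiftmod{k}$ to be supported at the $\Pi$-vertices. This does not follow immediately, because the functors $\Phi=\Hom_\Gamma(\shiftmod{k},-)$ and $\Psi=\kdual\Hom_\Gamma(-,\shiftmod{k})$ transporting between $\Gamma$-modules and $\shiftalg{k}$-modules are only left or right exact; applying, say, $\Phi$ to an epimorphism $\Pi^N\surj\kdual\Gamma$ gives an epimorphism onto $\Phi(\kdual\Gamma)=\kdual\shiftmod{k}$ only after verifying that the kernel lies in $\Ker\Ext^1_\Gamma(\shiftmod{k},-)$, and dually for $\Psi$ applied to a monomorphism from $\Gamma$ into $\add{\Pi}$. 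These $\Ext$-vanishings involve the cosyzygies $\Omega^{-i}\Gamma$ and syzygies $\Omega^j\kdual\Gamma$ and are exactly where the hypothesis $0<k<d$ enters; $\shiftmod{k}\in\gen(\Pi)\cap\cogen(\Pi)$ alone is not enough. The paper packages this transfer into Proposition~\ref{CasIntExt}, which characterises $\kdual T\in\gen_{n-1}(P)\cap\cogen^{m-1}(I)$ in terms of such $\Ext$-vanishing conditions over $\Gamma$, and Theorem~\ref{shifted-intexts} then verifies those conditions by a short homological computation using the projective-injective injective resolution of $\Gamma$. Your final paragraph does gesture at needing "the subcategory characterisations of Section~\ref{subcategories} to do the real work", so you have the right instinct; but as written, the key exactness step is missing, and filling it in is essentially the content of the paper's Proposition~\ref{CasIntExt}.

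Your alternative route via the explicit formulas of Theorems~\ref{cdescription} and \ref{cdescription*} is plausible but would require a separate careful computation; the paper does not pursue it, and it does not obviously sidestep the exactness issue.
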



Throughout the paper, all algebras are finite-dimensional $\K$-algebras over some field $\K$, and, without additional qualification, `module' is taken to mean `finitely-generated left module'. As mentioned above, $\kdual=\Hom_{\K}(-,\K)$ denotes the $\K$-linear dual. Morphisms are composed right-to-left.

\section{Shifted modules and algebras}
\label{shifted-algebras}

Throughout this section, we fix a finite-dimensional algebra $\Gamma$, assumed for simplicity to be basic, over a field $\K$. The goal of this section is to define certain special tilting and cotilting $\Gamma$-modules in the case that $\Gamma$ has positive dominant dimension, and list some of their basic properties.

\begin{dfn}
\label{dom-dim}
Let $k$ be a non-negative integer. We say that $\Gamma $ has \emph{dominant dimension} at least $k$, and write $\domdim \Gamma \geq k$, if the regular module ${}_\Gamma\Gamma$ has an injective resolution
\[\begin{tikzcd}[column sep=20pt]
0\arrow{r}&\Gamma\arrow{r}&\Pi_0\arrow{r}&\cdots\arrow{r}&\Pi_{k-1}\arrow{r}&\cdots
\end{tikzcd}\]
with $\Pi_0, \ldots , \Pi_{k-1}$ projective-injective; when $k=0$, this condition is taken to be empty.  As the notation suggests, we write $\domdim\Gamma=k$ if $\domdim\Gamma\geq k$ and $\domdim\Gamma\not\geq k+1$.
\end{dfn}

\begin{rem} \label{left-right} As always, we refer to left $\Gamma$-modules in our definition of dominant dimension. However, the analogous definition using right modules is equivalent to ours by a result of M\"{u}ller \cite[Thm.~4]{Mue}. As a consequence, $\domdim\Gamma\geq k$ if and only if $\kdual \Gamma $ has a projective resolution 
\[\begin{tikzcd}[column sep=20pt]
\cdots\arrow{r}&\Pi^{k-1}\arrow{r}&\cdots\arrow{r}&\Pi^0\arrow{r}&\kdual\Gamma\arrow{r}&0
\end{tikzcd}\]
with $\Pi^0, \ldots , \Pi^{k-1}$ projective-injective.
\end{rem}

\begin{dfn} \label{dfn-tilting}
Let $k\geq 0$. We say that $T\in\lmod{\Gamma}$ is a \emph{$k$-tilting} module if
\begin{itemize}
\item[(T1)] $\pdim T \leq k$,
\item[(T2)] $\Ext^i_\Gamma(T,T)=0$ for $i>0$, and
\item[(T3)] there is an $\add{T}$-coresolution of $\Gamma$ of length $k$, i.e.\ an exact sequence
\[\begin{tikzcd}[column sep=20pt]
0\arrow{r}&\Gamma\arrow{r}&t_0\arrow{r}&\cdots\arrow{r}&t_k\arrow{r}&0
\end{tikzcd}\]
with $t_j \in \add T$ for $0\leq j\leq k$.
\end{itemize}
We say a $k$-tilting module $T$ is \emph{$P$-special} for a projective module $P$ if $P\in\add{T}$ and there is a sequence as in (T3) with $t_j\in \add P$ for $0\leq j \leq k-1$.

Dually, we say that $C$ is a \emph{$k$-cotilting module} if 
\begin{itemize}
\item[(C1)] $\idim C \leq k$,
\item[(C2)] $\Ext^i_\Gamma(C,C) =0$ for $i>0$, and
\item[(C3)] there is an $\add{C}$-resolution of $\kdual\Gamma$ of length $k$, i.e.\ an exact sequence
\[\begin{tikzcd}[column sep=20pt]
0\arrow{r}&c^k\arrow{r}&\cdots\arrow{r}&c^0\arrow{r}&\kdual\Gamma\arrow{r}&0
\end{tikzcd}\]
with $c^j\in\add{C}$ for $0\leq j\leq k$.
\end{itemize}
We say a $k$-cotilting module $C$ is \emph{$I$-special} for an injective module $I$ if $I\in\add{C}$ and there is a sequence as in (C3) with $c^j \in \add I$ for $0\leq j \leq k-1$.
\end{dfn}

\begin{pro}
\label{shifting}
Let $\Gamma$ be a finite-dimensional algebra, and let $\Pi$ be a maximal projective-injective summand of $\Gamma$. Then there exists a basic $\Pi$-special $k$-tilting $\Gamma$-module and a basic $\Pi$-special $k$-cotilting $\Gamma$-module $\coshiftmod{k}$ if and only if $\domdim{\Gamma}\geq k$. These modules are unique up to isomorphism.
\end{pro}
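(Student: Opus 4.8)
The plan is to construct $\shiftmod{k}$ from the minimal injective coresolution of $\Gamma$, to check the tilting axioms directly, and to obtain uniqueness from a count of indecomposable summands; the cotilting statement then follows by applying the duality $\kdual$ to this result for $\Gamma^{\op}$, whose dominant dimension is again at least $k$ by M\"uller's theorem (Remark~\ref{left-right}).

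\emph{Existence.} Let $0\to\Gamma\to I^0\to I^1\to\cdots$ be the minimal injective coresolution of ${}_\Gamma\Gamma$; by hypothesis $I^0,\dots,I^{k-1}\in\add\Pi$. Put $Z^0=\Gamma$ and $Z^i=\coKer(I^{i-2}\to I^{i-1})$, so there is an exact sequence $0\to\Gamma\to I^0\to\cdots\to I^{k-1}\to Y\to0$ with $Y:=Z^k$, and let $\shiftmod{k}$ be the basic module with $\add\shiftmod{k}=\add(\Pi\oplus Y)$. Cutting the displayed sequence into short exact sequences $0\to Z^{i-1}\to I^{i-1}\to Z^i\to0$ and using that each $I^{i-1}$ is projective gives $\pdim_\Gamma Y\leq k$, hence (T1); the same sequence is an $\add\shiftmod{k}$-coresolution of $\Gamma$ of length $k$ whose first $k$ terms lie in $\add\Pi$, giving (T3) and $\Pi$-speciality. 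For (T2): since $\Pi$ is projective-injective we have $\Ext^{\geq1}_\Gamma(\Pi,-)=0=\Ext^{\geq1}_\Gamma(-,\Pi)$, so it remains to show $\Ext^{j}_\Gamma(Y,Y)=0$ for $j\geq1$. Applying $\Hom_\Gamma(Y,-)$ to the short exact sequences above and using $\Ext^{\geq1}_\Gamma(Y,I^{i-1})=0$ yields $\Ext^j_\Gamma(Y,Z^i)\cong\Ext^{j+1}_\Gamma(Y,Z^{i-1})$ for every $j\geq1$; iterating from $i=k$ down to $i=1$ gives $\Ext^j_\Gamma(Y,Y)\cong\Ext^{j+k}_\Gamma(Y,\Gamma)$, which vanishes for $j\geq1$ because $\pdim_\Gamma Y\leq k$. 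This self-orthogonality is the one genuine computation, and I regard it as the crux of the proof.

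\emph{Uniqueness.} Let $T$ be an arbitrary basic $\Pi$-special $k$-tilting module. If $P$ is an indecomposable projective-injective module then $P\mid\Gamma$, so in a coresolution $0\to\Gamma\to T^0\to\cdots\to T^k\to0$ as in (T3) we get a monomorphism $P\hookrightarrow T^0\in\add T$, which splits since $P$ is injective; thus $P\mid T$, and hence $\Pi\mid T$. Next fix a witnessing $\Pi$-special coresolution $0\to\Gamma\to\Pi^0\to\cdots\to\Pi^{k-1}\to t_k\to0$ with $\Pi^i\in\add\Pi$ and $t_k\in\add T$. Since $\Pi^0,\dots,\Pi^{k-1}$ are injective, this exact sequence is the direct sum of the truncated minimal injective coresolution $0\to\Gamma\to I^0\to\cdots\to I^{k-1}\to Y\to0$ of $\Gamma$ and a split-exact complex; comparing last terms gives $t_k\cong Y\oplus J$ with $J$ injective, so every indecomposable summand of $Y$ is a summand of $t_k$, hence of $T$. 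Therefore $\add\shiftmod{k}=\add(\Pi\oplus Y)\subseteq\add T$. Finally, $T$ and $\shiftmod{k}$ are basic tilting modules over the basic algebra $\Gamma$, so each has exactly as many indecomposable summands as $\Gamma$ has simple modules; together with the inclusion just obtained this forces $\add T=\add\shiftmod{k}$, i.e.\ $T\cong\shiftmod{k}$.

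\emph{The cotilting module.} Apply the construction and uniqueness above to $\Gamma^{\op}$: its dominant dimension is at least $k$, and its maximal projective-injective summand is $\kdual\Pi$, since $\kdual$ sends indecomposable projective-injective $\Gamma$-modules bijectively to indecomposable projective-injective $\Gamma^{\op}$-modules. This yields a unique basic $\kdual\Pi$-special $k$-tilting $\Gamma^{\op}$-module $T$, and $\coshiftmod{k}:=\kdual T$ is then a unique basic $\Pi$-special $k$-cotilting $\Gamma$-module: $\kdual$ interchanges $\pdim$ with $\idim$, $\Ext$-groups with $\Ext$-groups, the coresolution of the regular $\Gamma^{\op}$-module in (T3) with the resolution of the injective cogenerator $\kdual\Gamma$ in (C3), and $\add(\kdual\Pi)$ with $\add\Pi$. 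As noted, the one real obstacle is the vanishing $\Ext^{\geq1}_\Gamma(Y,Y)=0$; everything else is bookkeeping with (co)syzygies, with the structure of injective (co)resolutions, and with the standard fact that a basic tilting module has exactly as many indecomposable summands as there are simple modules.
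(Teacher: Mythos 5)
Your proof is correct and follows essentially the same route as the paper: build $\shiftmod{k}$ from the truncated (minimal) injective coresolution of $\Gamma$, verify (T1)--(T3) via the resulting short exact sequences, and deduce uniqueness from the decomposition of any injective coresolution into a minimal one plus a contractible part together with the count of indecomposable summands. Your explicit dimension-shift $\Ext^j_\Gamma(Y,Y)\cong\Ext^{j+k}_\Gamma(Y,\Gamma)=0$ and the transfer to $\Gamma^{\op}$ for the cotilting half are just more spelled-out versions of what the paper leaves as ``a standard homological argument'' and ``those for $\coshiftmod{k}$ being dual.''
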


\begin{proof}
We prove the statements involving $\shiftmod{k}$, those for $\coshiftmod{k}$ being dual. If $\Gamma$ has a $\Pi$-special $k$-tilting module, then $\domdim{\Gamma}\geq k$ by (T3). Conversely, if $\domdim{\Gamma}\geq k$, there is an exact sequence
\begin{equation}
\label{Tk-def}
\begin{tikzcd}[column sep=20pt]
0\arrow{r}&\Gamma\arrow{r}&\Pi_0\arrow{r}&\cdots\arrow{r}&\Pi_{k-1}\arrow{r}&T\arrow{r}&0
\end{tikzcd}
\end{equation}
with $\Pi_i$ projective-injective for $0\leq i\leq k-1$. Let $\shiftmod{k}$ be a basic module with $\add{\shiftmod{k}}=\add(T\oplus\Pi)$. Then $\shiftmod{k}$ satisfies (T1) and (T3), and is $\Pi$-special, by \eqref{Tk-def}. A standard homological argument, involving the application of the functors $\Hom_\Gamma(T_k,-)$ and $\Hom_\Gamma(-,T_k)$ to the short exact sequences coming from \eqref{Tk-def}, shows that $\Ext^i_\Gamma(T_k,T_k)=\Ext^i_\Gamma(\Gamma,\Gamma)=0$ for $i>0$, so $T_k$ satisfies (T2).

Any two $\Pi$-special $k$-tilting $\Gamma$-modules are, by definition, $k$-th cosyzygies of the regular module $\Gamma$. Thus if $T'$ is an arbitrary $k$-th cosyzygy of $\Gamma$, it differs from $\shiftmod{k}$ only by the possible removal of projective-injective summands and addition of injective summands, so $T\in\add{T'}$, where $T$ is as in \eqref{Tk-def}. If $T'$ is tilting then we must also have $\Pi\in\add{T'}$, so $\shiftmod{k}\in\add{T'}$. If $T'$ is basic, it then follows that $T'\cong\shiftmod{k}$ since all tilting modules have the same number of indecomposable summands up to isomorphism.
\end{proof}

\begin{dfn}
For $\Gamma$ a finite-dimensional algebra with $\domdim{\Gamma}\geq k$, write $\shiftmod{k}$ and $\coshiftmod{k}$ for basic $\Pi$-special $k$-tilting and $k$-cotilting modules respectively, these modules being unique up to isomorphism by Proposition~\ref{shifting}. We call $\shiftmod{k}$ the \emph{$k$-shifted} module of $\Gamma$, and $\coshiftmod{k}$ the the \emph{$k$-coshifted} module of $\Gamma$. The algebras
\[
\shiftalg{k} =\End_\Gamma (\shiftmod{k})^{\op}, \quad \coshiftalg{k} =\End_\Gamma (\coshiftmod{k})^{\op}, 
\]
are called respectively the $k$-shifted and $k$-coshifted algebras of $\Gamma$.
\end{dfn}

\begin{rem}\label{opposite}
If $\domdim \Gamma \geq k$, then $\domdim \Gamma^{\op} \geq k$ by Remark~\ref{left-right}. The dual of the $k$-coshifted $\Gamma^{\op}$-module is the $k$-shifted $\Gamma$-module.
\end{rem}

The modules $\shiftmod{k}$ appeared briefly as an example in a paper of Chen and Xi \cite{CX}, where they are called `canonical tilting modules'. It is well-known that if $T$ is a $k$-tilting $\Gamma$-module with $B=\End_\Gamma(T)^{\op}$, then the right derived functor of $\Hom_\Gamma(T,-)$ and the left derived functor of $\kdual \Hom_B(-, \kdual T)$ are quasi-inverse triangle equivalences between the bounded derived categories $\dercat[\bound]{\Gamma}$ and $\mcD^b(B)$, cf.\ \cite[Thm.~2.1]{CPS}. In particular, $\Gamma$ is derived equivalent to all of its $k$-shifted and $k$-coshifted algebras.

The proof of Proposition~\ref{shifting} illustrates that the shifted and coshifted modules are related to $\Gamma$ and $\kdual\Gamma$ analogously to the way in which an arbitrary module over a selfinjective algebra is related to its shifts in the stable module category (hence our choice of terminology). Despite this analogy, the case in which $\Gamma$ is selfinjective does not provide any interesting examples of our constructions, since in this case $\shiftmod{k}\cong\Gamma\cong\coshiftmod{k}$ for all $k\geq0$---indeed, there are no other tilting or cotilting $\Gamma$ modules. More interestingly, any non-selfinjective algebra displays very different behaviour, with no coincidences between any two of the shifted modules. This follows from the following observation.

\begin{pro}
\label{shifted-projdim}
If $\Gamma$ is not selfinjective, then $\pdim{\shiftmod{k}}=k$ (and dually $\idim{\coshiftmod{k}}=k$) for all $0\leq k\leq\domdim\Gamma$.
\end{pro}
\begin{proof}
Let $\shiftmod{k}^\circ$ be the maximal non-projective-injective summand of $\shiftmod{k}$
, and let $P$ be the maximal non-injective summand of $\Gamma$, which is non-zero by assumption. As in the proof of Proposition~\ref{shifting}, taking the minimal injective resolution of $P$ and truncating yields an exact sequence
\[\begin{tikzcd}[column sep=20pt]
0\arrow{r}&P\arrow{r}&\Pi_0\arrow{r}&\Pi_1\arrow{r}&\cdots\arrow{r}&\Pi_{k-1}\arrow{r}&\shiftmod{k}^\circ\arrow{r}&0
\end{tikzcd}\]
with $\Pi_j\in\add{\Pi}$ projective for all $j$, so this sequence is a minimal projective resolution of $\shiftmod{k}^\circ$. Since $\shiftmod{k}=\shiftmod{k}^\circ\oplus\Pi$ with $\Pi$ projective, we conclude that $\pdim{\shiftmod{k}}=k$. 
\end{proof}

\begin{rem}
It follows from Proposition~\ref{shifted-projdim} that any counterexample to the Nakayama conjecture (i.e.\ a non-selfinjective algebra of infinite dominant dimension) would have a tilting module of each possible projective dimension. This observation shows that the truth of the finitistic dimension conjecture (for a family of algebras) implies the truth of the Nakayama conjecture (for the same family), and is essentially equivalent to Tachikawa's proof of this fact \cite[\S8]{T}.
\end{rem}

To give a slightly different characterisation of the modules $T_k$ and $C^k$, we introduce the following definitions, which will also be useful in Section~\ref{subcategories}.

\begin{dfn}
\label{gen-cogen}
Let $\mcA$ be an abelian category, and $X\in\mcA$ an object. For $k\geq0$, define $\gen_k(X)$ to be the full subcategory of $\mcA$ on objects $M$ such that there exists an exact sequence
\[\begin{tikzcd}[column sep=20pt]
X^k\arrow{r}&\dotsb\arrow{r}&X^0\arrow{r}&M\arrow{r}&0
\end{tikzcd}\]
with $X^i\in\add{X}$ for $0\leq i\leq k$, remaining exact under the functor $\Hom_{\mcA}(X,-)$. Dually, $\cogen^k(X)$ is the full subcategory of $\mcA$ on objects $N$ such that there exists an exact sequence
\[\begin{tikzcd}[column sep=20pt]
0\arrow{r}&N\arrow{r}&X_0\arrow{r}&\dotsb\arrow{r}&X_k
\end{tikzcd}\]
with $X_i\in\add{X}$ for all $0\leq i\leq k$, remaining exact under the functor $\Hom_{\mcA} (-,X)$. Note that the conditions involving the Hom-functor are automatic when $X$ is projective or injective respectively, or when $k\leq 1$. When $k=0$, we omit it from the notation and refer simply to $\gen(X)$ and $\cogen(X)$.
It is both natural and convenient to define $\gen_{-1}(X)=\mcA=\cogen^{-1}(X)$.
\end{dfn}

\begin{pro}
\label{gen-cogen-characterisation}
Let $\Pi$ be a maximal projective-injective summand of $\Gamma$, and $k\geq0$.
\begin{itemize}
\item[(a)]The subcategory $\gen_{k-1}(\Pi)\subseteq\lmod{\Gamma}$ contains a $k$-tilting object if and only if $\domdim{\Gamma}\geq k$. Any basic such $k$-tilting object is isomorphic to the $\Pi$-special $k$-tilting module $\shiftmod{k}$.
\item[(b)]The subcategory $\cogen^{k-1}(\Pi)\subseteq\lmod{\Gamma}$ contains a $k$-cotilting object if and only if $\domdim{\Gamma}\geq k$. Any basic such $k$-cotilting object is isomorphic to the $\Pi$-special $k$-cotilting module $\coshiftmod{k}$.
\end{itemize}
\end{pro}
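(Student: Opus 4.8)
\emph{Proof proposal.} The plan is to prove part (a); part (b) then follows by the evident duality, applying $\kdual$ (which exchanges $\lmod\Gamma$ and $\rmod\Gamma\simeq\lmod{\Gamma^{\op}}$, $k$-cotilting with $k$-tilting, $\cogen^{k-1}(\Pi)$ with $\gen_{k-1}(\kdual\Pi)$, and $\coshiftmod{k}$ with $\shiftmod{k}$ over $\Gamma^{\op}$) and using that $\domdim\Gamma=\domdim\Gamma^{\op}$ by Remark~\ref{left-right}. So I concentrate on (a).

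For the implication $\domdim\Gamma\geq k\Rightarrow\gen_{k-1}(\Pi)$ contains a $k$-tilting object I take $T=\shiftmod{k}$ of Proposition~\ref{shifting}, which is $k$-tilting; only $T\in\gen_{k-1}(\Pi)$ needs checking. Deleting the leading $\Gamma$ from \eqref{Tk-def}, exactness of that sequence at $\Pi_1,\dots,\Pi_{k-1}$ and at $T$ is precisely exactness of $\Pi_0\to\Pi_1\to\dots\to\Pi_{k-1}\to T\to0$, which after re-indexing $X^i=\Pi_{k-1-i}$ is a witness that $T\in\gen_{k-1}(\Pi)$; as $\Pi\in\gen_{k-1}(\Pi)$ and $\gen_{k-1}(\Pi)$ is closed under direct sums and summands, $\shiftmod{k}\in\gen_{k-1}(\Pi)$ as well. (For $k=0$ there is nothing to check, since $\gen_{-1}(\Pi)=\lmod\Gamma$ and $\shiftmod{0}\cong\Gamma$.)

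For the converse and the uniqueness clause, suppose $T'$ is a $k$-tilting object of $\gen_{k-1}(\Pi)$. It is enough to show $T'$ is $\Pi$-special: Proposition~\ref{shifting} then gives $T'\cong\shiftmod{k}$, and deleting the last term from a (T3)-coresolution of a $\Pi$-special $k$-tilting module produces an exact sequence $0\to\Gamma\to\Pi^0\to\dots\to\Pi^{k-1}$ with each $\Pi^i$ projective-injective, so $\domdim\Gamma\geq k$. (The case $k=0$ is the standard fact that $\Gamma$ is the unique basic $0$-tilting module, with $\domdim\Gamma\geq0$ automatic, so assume $k\geq1$.) First, $\Pi$ is a summand of every tilting $\Gamma$-module (as in the proof of Proposition~\ref{shifting}), so $\add\Pi\subseteq\add T'$. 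Since the witness for $T'\in\gen_{k-1}(\Pi)$ is exact at $T'$, we have $T'\in\gen(\Pi)=\Fac(\Pi)$; as $\Gamma$ is projective and embeds into $\add T'$ by (T3), it therefore embeds into a projective-injective (in particular $\domdim\Gamma\geq1$), and every map $\Gamma\to T'$ factors through $\add\Pi$. Hence the injective envelope $\Gamma\hookrightarrow\Pi^0$ lies in $\add\Pi$ and is a left $\add T'$-approximation, so one may begin an $\add T'$-coresolution $0\to\Gamma\to t^0\to\dots\to t^k\to0$ of $\Gamma$ (of length $\leq k$ by (T3)) with $t^0\in\add\Pi$. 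The remaining step — the real work — is to iterate: to show $t^1,\dots,t^{k-1}$ may likewise be taken in $\add\Pi$, using the $\add\Pi$-presentation of $T'$, not merely $T'\in\Fac\Pi$, to keep the successive cokernels within reach of $\add\Pi$; this done, the coresolution witnesses that $T'$ is $\Pi$-special.

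I expect this iteration to be the crux and the main obstacle. The difficulty is that the naive recipe of passing to injective envelopes at every step is circular: that $\coKer(\Gamma\hookrightarrow\Pi^0)$ again has a projective-injective injective envelope is equivalent to $\domdim\Gamma\geq2$, which is part of the conclusion. The argument must therefore genuinely exploit the presentation of $T'$ by objects of $\add\Pi$ to propagate the condition down the coresolution — for instance by inducting on $k$ with the statement phrased for an arbitrary module carrying a sufficiently short $\add T'$-coresolution, or by splicing the $\add\Pi$-presentation of $T'$ against the (T3)-coresolution of $\Gamma$ and reading off the required sequence. The surrounding bookkeeping — exactness of spliced complexes, existence and minimality of approximations, and the closure properties of $\gen_{k-1}(\Pi)$ used above — is routine.
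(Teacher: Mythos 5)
You have correctly identified that your proposal has a gap — and it is a genuine one, not merely a technical loose end.  The first step (factoring $\Gamma\to T'$ through $\add\Pi$ because $\Gamma$ is projective and $T'\in\gen(\Pi)$) is sound, and gives $t^0\in\add\Pi$.  But the iteration fails for precisely the reason you state: the cokernel $M_1=\coKer(\Gamma\hookrightarrow\Pi^0)$ is no longer projective, so the factoring argument does not apply, and showing $M_1$ has a projective-injective envelope is essentially what one is trying to prove.  The vague remedies you float (inducting on a relative statement, splicing presentations) are not carried out and are not obviously going to work, since the only leverage over $M_1$ that is not circular is the cohomological information encoded in (T2) and $\gen_{k-1}(\Pi)$, and extracting the needed $\Ext^1$-vanishing from it is exactly the hard part.

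The paper sidesteps this entirely by arguing from the other end: it considers the minimal \emph{projective resolution} of $T$ rather than an $\add{T}$-\emph{coresolution} of $\Gamma$.  Since $T\in\gen_{k-1}(\Pi)$ and $\pdim T\leq k$, the minimal projective resolution has the form
\[
0\to P\to\Pi_{k-1}\to\cdots\to\Pi_0\to T\to 0
\]
with $\Pi_i\in\add\Pi$ and $P$ projective with no projective-injective summands (by minimality).  This sequence is therefore simultaneously a projective resolution of $T$ and a minimal injective coresolution of $P$, so $T$ and $P$ are matched under the stable equivalence $\Omega^{\pm}$ between $\gen(\Pi)/\add\Pi$ and $\cogen(\Pi)/\add\Pi$, and counting indecomposable summands of the basic tilting module $T$ forces $\add(P\oplus\Pi)=\add\Gamma$.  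Adding $\Pi$ to $\Pi_{k-1}$ then produces an injective resolution of $\Gamma$ with $k$ projective-injective terms, giving $\domdim\Gamma\geq k$ and $T\cong\shiftmod{k}$ in one stroke.  No iterated approximation argument is needed, which is exactly what you were lacking; if you want to rescue your approach, the splicing idea would have to be replaced by this kind of dual viewpoint.
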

\begin{proof}
We prove only (a), since (b) is dual. If $\domdim{\Gamma}\geq k$, then the module $\shiftmod{k}$ from Proposition~\ref{shifting} lies in $\gen_{k-1}(\Pi)$. Conversely, if $T\in\gen_{k-1}(\Pi)$ is $k$-tilting, it has projective dimension at most $k$, and the minimal projective resolution of $T$ is of the form
\[\begin{tikzcd}[column sep=20pt]
0\arrow{r}&P\arrow{r}&\Pi_{k-1}\arrow{r}&\cdots\arrow{r}&\Pi_0\arrow{r}&T\arrow{r}&0
\end{tikzcd}\]
for $\Pi_i\in\add{\Pi}$ and $P$ projective. Without loss of generality, we may assume $T$, like $\Gamma$, is basic. Then the number of indecomposable summands of $P$ is the number of non-projective-injective summands of $T$, which is the number of non-projective-injective summands of $\Gamma$. Thus there is an exact sequence
\[\begin{tikzcd}[column sep=20pt]
0\arrow{r}&\Gamma\arrow{r}&\Pi_{k-1}\oplus\Pi\arrow{r}&\cdots\arrow{r}&\Pi_0\arrow{r}&T\arrow{r}&0,
\end{tikzcd}\]
from which it follows simultaneously that $\domdim{\Gamma}\geq k$ and that $T$ is $\Pi$-special, hence isomorphic to $T_k$ by Proposition~\ref{shifting}.
\end{proof}



It is possible to identify those algebras that may be obtained as $k$-shifted or $k$-coshifted algebras intrinsically, via the existence of cotilting or tilting modules with special properties. As usual, we write $\nu=\kdual\Hom_A(-,A)$ and $\nu^-=\Hom_A(\kdual A,-)$ for the Nakayama functors of an algebra $A$.

\begin{lem}\label{proj-injAndNakayama}Let $T$ be a $k$-tilting $\Gamma$-module with endomorphism algebra $B$. By the Brenner--Butler tilting theorem \cite{BrenBut}, $C=\kdual T$ is a $k$-cotilting $B$-module with endomorphism algebra $\Gamma$.
\begin{itemize}
\item[(1)]If $T$ is $P$-special for a projective $\Gamma$-module $P$, then $C$ is $I_P$-special for $I_P=\kdual \Hom_\Gamma (P, T)$. Dually, if $C$ is $I$-special for an injective $B$-module $I$, then $T$ is $P^I$-special for $P^I= \Hom_B(C,I)$.
\item[(2)] Let $\Pi\in \add T $ be projective-injective. Then the projective $B$-module
$P_\Pi= \Hom_{\Gamma }(T, \Pi )$ and the injective $B$-module $I_\Pi= \kdual \Hom_{\Gamma} (\Pi, T)$ satisfy $I_\Pi=\nu P_\Pi$. Dually, if $\Pi \in \add C$ is projective-injective, then the $\Gamma$-modules
$P^\Pi= \Hom_{B}(C, \Pi )$ and $I^\Pi= \kdual \Hom_{B} (\Pi, C)$ satisfy $I^\Pi=\nu P^\Pi$.
\item[(3)] If $P$ is a projective $\Gamma$-module with $P, \nu P \in \add T$, then $I_P:= \kdual\Hom_{\Gamma} (P,T)$ is a projective-injective $B$-module. Dually, if $I$ is an injective $B$-module with $I, \nu^- I \in \add C$, then 
$P^I:= \Hom_{B }(C, I)$ is a projective-injective $\Gamma $-module.
\end{itemize}
\end{lem}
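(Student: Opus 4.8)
We outline the proof. I would prove the statements in (1)--(3) uniformly, obtaining the second (``dual'') assertion of each part from the first by interchanging the tilting pair $(\Gamma,T)$ and the cotilting pair $(B,C)$ --- using $\End_B(C)^{\op}\cong\Gamma$ (Brenner--Butler) and, where necessary, that $T$ is also a $k$-tilting $B^{\op}$-module with $\End_{B^{\op}}(T)^{\op}\cong\Gamma^{\op}$. The recurring tools are: the bimodule structure of $T$ (a $\Gamma$-$B$-bimodule) and $C=\kdual T$ (a $B$-$\Gamma$-bimodule); the natural bimodule identifications $\Hom_\Gamma(T,T)\cong B$, $\Hom_B(C,C)\cong\Gamma$, $\Hom_\Gamma(\Gamma,T)\cong T$, $\Hom_B(B,C)\cong C$; and the fact that $\Hom_\Gamma(T,-)$ restricts to an equivalence $\add T\isoto\proje{B}$ (and dually $\Hom_B(C,-)\colon\add C\isoto\proje{\Gamma}$). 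Part (2) is then immediate from fullness and faithfulness of $\Hom_\Gamma(T,-)$ on $\add T$: since $\Pi\in\add T$ there is a natural isomorphism of right $B$-modules $\Hom_\Gamma(\Pi,T)\cong\Hom_B(\Hom_\Gamma(T,\Pi),\Hom_\Gamma(T,T))\cong\Hom_B(P_\Pi,B)$, and applying $\kdual$ yields $I_\Pi\cong\kdual\Hom_B(P_\Pi,B)=\nu P_\Pi$.

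For (3) I would use the chain of natural isomorphisms of left $B$-modules
\[I_P=\kdual\Hom_\Gamma(P,T)\cong\kdual(\Hom_\Gamma(P,\Gamma)\otimes_\Gamma T)\cong\Hom_\Gamma(T,\kdual\Hom_\Gamma(P,\Gamma))=\Hom_\Gamma(T,\nu P),\]
valid for any finitely generated projective $P$ (the first isomorphism is the usual one for projective $P$, the second is tensor--Hom duality over $\K$, and the last is the definition of $\nu$). Since $P\in\add T$, the module $\Hom_\Gamma(P,T)$ is a summand of $\Hom_\Gamma(T,T)\cong B$ as a right $B$-module, so $I_P$ is injective; since $\nu P\in\add T$, the module $\Hom_\Gamma(T,\nu P)$ is a summand of $\Hom_\Gamma(T,T)\cong B$ as a left $B$-module, so $I_P$ is projective. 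Hence $I_P$ is projective-injective; the dual assertion is obtained analogously, via the isomorphism $P^I=\Hom_B(C,I)\cong\kdual\Hom_B(\nu^-I,C)$ for injective $I$.

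Part (1) needs the most care, because (T3) presents $T$ by a \emph{co}resolution of $\Gamma$ while realising $C$ as $I_P$-special requires a \emph{re}solution of $\kdual B$ --- and applying a Hom-functor followed by $\kdual$ to a coresolution again produces a coresolution, so the sequence of (T3) cannot be used directly. Instead I would first extract from $P$-specialness a projective resolution
\[0\to Q\to q_{k-1}\to\dots\to q_0\to T\to 0\]
of $T$ of length at most $k$ with $q_0,\dots,q_{k-1}\in\add P$ and $Q$ projective --- essentially a reinterpretation of the $P$-special sequence of (T3), all of whose terms are projective, as a projective resolution of the relevant cosyzygy (compare the proof of Proposition~\ref{gen-cogen-characterisation}); note this forces $P\in\add T$, which is anyway needed for $I_P$ to be injective. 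Applying $\Hom_\Gamma(-,T)$, with exactness checked by the standard dimension shift through the syzygies of $T$ (using projectivity of the $q_j$ and $Q$, and $\Ext^{>0}_\Gamma(T,T)=0$), gives an exact sequence of right $B$-modules
\[0\to\Hom_\Gamma(T,T)\to\Hom_\Gamma(q_0,T)\to\dots\to\Hom_\Gamma(q_{k-1},T)\to\Hom_\Gamma(Q,T)\to 0\]
in which $\Hom_\Gamma(T,T)\cong B$, each $\Hom_\Gamma(q_j,T)\in\add\Hom_\Gamma(P,T)$, and $\Hom_\Gamma(Q,T)\in\add\Hom_\Gamma(\Gamma,T)$. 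Applying the exact contravariant functor $\kdual$ converts this coresolution of $B\cong\Hom_\Gamma(T,T)$ into a resolution
\[0\to\kdual\Hom_\Gamma(Q,T)\to\kdual\Hom_\Gamma(q_{k-1},T)\to\dots\to\kdual\Hom_\Gamma(q_0,T)\to\kdual B\to 0\]
of $\kdual B$, whose top term lies in $\add\kdual\Hom_\Gamma(\Gamma,T)=\add C$ and whose remaining $k$ terms lie in $\add\kdual\Hom_\Gamma(P,T)=\add I_P$; since also $I_P\in\add C$ (as $P$ is projective, $\Hom_\Gamma(P,T)\in\add\Hom_\Gamma(\Gamma,T)$ as a right $B$-module), this is an $\add C$-resolution of $\kdual B$ showing $C$ to be $I_P$-special. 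The second assertion of (1) then follows by applying this to the pair $(B^{\op},T)$ and dualising back via $\kdual$.

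The main obstacle I anticipate is not conceptual but bureaucratic: consistently tracking left- versus right-module structures, over both $\Gamma$ and $B$, on all the Hom-spaces involved and on the resulting (co)resolutions; and, in (1), the observation that it is the projective resolution of $T$, rather than the defining sequence (T3), that must be fed into $\Hom_\Gamma(-,T)$ so that $\kdual$ produces a resolution and not a coresolution. A secondary point requiring a short separate verification is the precise form of the projective resolution of a $P$-special tilting module used at the beginning of the argument for (1).
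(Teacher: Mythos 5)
Your proofs of parts~(2) and~(3) are essentially identical to the paper's: (2) is exactly full faithfulness of $\Hom_\Gamma(T,-)\colon\add T\to\proje B$, and for (3) the paper likewise observes that $\nu P\in\add T$ makes $\Hom_\Gamma(T,\nu P)$ projective while $P\in\add T$ and the Nakayama formula make $\Hom_\Gamma(T,\nu P)\cong\kdual\Hom_\Gamma(P,T)$ injective; your explicit chain of isomorphisms just spells out that Nakayama step.

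For part~(1) your account is more careful than the paper's. The paper says only ``apply $\kdual\Hom_\Gamma(-,T)$ to the exact sequence from (T3)''. Taken literally, applying this right-exact covariant functor to $0\to\Gamma\to t_0\to\cdots\to t_k\to 0$ (which stays exact by dimension-shifting with $\Ext^{>0}_\Gamma(T,T)=0$ and projectivity of $t_j$ for $j<k$) yields an exact sequence whose far term is $\kdual\Hom_\Gamma(t_k,T)$, not $\kdual B=\kdual\Hom_\Gamma(T,T)$; since $t_k\in\add T$ is in general a proper summand of $T$ (for $\shiftmod{k}$ the cosyzygy $t_k$ omits $\Pi$), this is not by itself the (C3) resolution. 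Your diagnosis of the difficulty is slightly off --- the resulting complex is a perfectly good resolution, just of the wrong module --- but your fix is exactly the one needed: feed a length-$k$ projective resolution of $T$ into $\Hom_\Gamma(-,T)$, whose zeroth term then becomes $B=\Hom_\Gamma(T,T)$ so that $\kdual$ produces a resolution of $\kdual B$. The ``short separate verification'' you flag, namely that $P$-specialness yields a projective resolution of $T$ (rather than of $t_k$) with the first $k$ terms in $\add P$, is genuinely required: one must observe that a basic $P$-special $T$ differs from $t_k$ only by projective summands lying in $\add P$ (in the relevant case $P=\Pi$ this follows from the counting argument in the proof of Proposition~\ref{gen-cogen-characterisation}), so that these extra summands contribute trivial resolutions. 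The paper's proof of~(1) elides exactly this step; so your version is not a different route, just a more honest exposition of the same one.
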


\begin{proof}
As usual, we give the proof only for the first item in each pair of dual statements.
\begin{itemize}
\item[(1)] This follows by applying $\kdual\Hom_\Gamma(-,T)$ to the exact sequence from (T3), using that $T$ is $P$-special.
\item[(2)]Since $\Hom_\Gamma(T,-)\colon\add{T}\to\proje{B}$ is fully faithful, we have
\[\nu P=\kdual\Hom_B(\Hom_\Gamma(T,\Pi),\Hom_\Gamma(T,T))=\kdual\Hom_\Gamma(\Pi,T)=I.\]
\item[(3)]Since $\nu P\in\add{T}$, the module $\Hom_{\Gamma}(T,\nu P)$ is projective. Since $P\in\add{T}$, the Nakayama formula implies that $\Hom_{\Gamma}(T,\nu P)\cong\kdual\Hom_{\Gamma}(P,T)$ is also injective. \qedhere
\end{itemize}

\end{proof}

\begin{pro}
\label{shifted-char}
 A finite-dimensional basic algebra $B$ is isomorphic to a $k$-shifted algebra if and only if there is an injective $B$-module $I$ and an $I$-special $k$-cotilting $B$-module $C$ with $\nu^- I \in \add C$. 
Under this isomorphism, $C$ is the dual of the $k$-shifted module. 

Dually, a finite-dimensional basic algebra $B$ is isomorphic to a $k$-coshifted algebra if and only if 
there exists a projective $B$-module $P$ and a $P$-special $k$-tilting $B$-module $T$ with $\nu P \in \add T$.  Under this isomorphism, $T$ is the dual of the $k$-coshifted module. 
\end{pro}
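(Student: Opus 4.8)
The plan is to prove the first (`shifted') statement; the second follows by duality (cf.\ Remark~\ref{opposite}). Both implications combine the Brenner--Butler tilting theorem with the three parts of Lemma~\ref{proj-injAndNakayama} and the uniqueness clause of Proposition~\ref{shifting}. For the forward implication, I would assume $B=\shiftalg{k}=\End_\Gamma(\shiftmod{k})^{\op}$ with $\domdim\Gamma\geq k$, let $\Pi$ be a maximal projective-injective summand of $\Gamma$ (so that $\shiftmod{k}$ is $\Pi$-special by Proposition~\ref{shifting}), and put $C:=\kdual\shiftmod{k}$, which is a $k$-cotilting $B$-module by Brenner--Butler. Lemma~\ref{proj-injAndNakayama}(1) shows $C$ is $I$-special for $I:=\kdual\Hom_\Gamma(\Pi,\shiftmod{k})$, and Lemma~\ref{proj-injAndNakayama}(2) shows that $I$ is injective over $B$, that $P_\Pi:=\Hom_\Gamma(\shiftmod{k},\Pi)$ is projective over $B$, and that $I=\nu_B P_\Pi$; hence $\nu_B^- I=P_\Pi$ since $P_\Pi$ is projective. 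The single step that is not purely formal is that $P_\Pi\in\add C$: because $\Pi$ is an \emph{injective} $\Gamma$-module it lies in $\add\kdual\Gamma$, so $P_\Pi\in\add\Hom_\Gamma(\shiftmod{k},\kdual\Gamma)$, and the natural isomorphism $\Hom_\Gamma(M,\kdual\Gamma)\cong\kdual M$ of left $B$-modules (applied to the $\Gamma$-$B$-bimodule $M=\shiftmod{k}$) identifies $\add\Hom_\Gamma(\shiftmod{k},\kdual\Gamma)$ with $\add\kdual\shiftmod{k}=\add C$. Thus $\nu_B^- I\in\add C$, and $C$ is by construction the $\kdual$ of the $k$-shifted module.

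For the converse, suppose $I$ is injective, $C$ is an $I$-special $k$-cotilting $B$-module, and $\nu_B^- I\in\add C$. First I would reduce to the case $I\in\add C$: by definition of `$I$-special' there is an exact sequence $0\to c^k\to\cdots\to c^0\to\kdual B\to0$ with all $c^j\in\add C$ and with $c^0,\dots,c^{k-1}\in\add I$, so replacing $I$ by a basic module with the same additive closure as $c^0\oplus\cdots\oplus c^{k-1}$ keeps $C$ an $I$-special $k$-cotilting module, keeps $\nu_B^- I\in\add C$ (the additive closure has only shrunk), and now ensures $I\in\add C$. Now set $\Gamma:=\End_B(C)^{\op}$, so that $T:=\kdual C$ is a $k$-tilting $\Gamma$-module with $\End_\Gamma(T)^{\op}\cong B$ by Brenner--Butler. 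By Lemma~\ref{proj-injAndNakayama}(1), $T$ is $P^I$-special with $P^I=\Hom_B(C,I)$, and since $I,\nu_B^- I\in\add C$, Lemma~\ref{proj-injAndNakayama}(3) shows $P^I$ is projective-injective over $\Gamma$. Hence $P^I\in\add\Pi$ for a maximal projective-injective summand $\Pi$ of $\Gamma$, so $T$ is $\Pi$-special; splicing a sequence as in (T3) for $T$ with an injective resolution of its final term exhibits an injective resolution of $\Gamma$ whose first $k$ terms are projective-injective, i.e.\ $\domdim\Gamma\geq k$, and Proposition~\ref{shifting} then forces $T\cong\shiftmod{k}$. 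Therefore $B\cong\End_\Gamma(T)^{\op}=\shiftalg{k}$ and $C=\kdual T\cong\kdual\shiftmod{k}$, as required.

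The only genuinely substantive point is the identity $\nu_B^- I=\Hom_\Gamma(\shiftmod{k},\Pi)\in\add\kdual\shiftmod{k}$ in the forward direction: this uses that $\Pi$ is injective (not merely a summand of $\shiftmod{k}$), together with additivity of $\Hom_\Gamma(\shiftmod{k},-)$ and the defining isomorphism $\Hom_\Gamma(-,\kdual\Gamma)\cong\kdual(-)$. Everything else is a matter of invoking Brenner--Butler, the three parts of Lemma~\ref{proj-injAndNakayama} and the uniqueness of $\Pi$-special tilting modules, once one is careful to track the $\Gamma$-$B$-bimodule structures so that the various $\Hom$ and $\kdual$ identifications hold as left $B$-modules; one should also note the degenerate case $k=0$, where $\shiftmod{0}=\Gamma$ and $C=\kdual\Gamma$ is the injective cogenerator and the equivalence is immediate.
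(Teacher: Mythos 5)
Your proof is correct and takes essentially the same route as the paper: both directions are deduced by combining the three parts of Lemma~\ref{proj-injAndNakayama} with the Brenner--Butler correspondence and the uniqueness statement of Proposition~\ref{shifting}. The one place where you are more careful than the paper is the converse, where you explicitly replace $I$ by a basic module with the same additive hull as $c^0\oplus\cdots\oplus c^{k-1}$ so that $I\in\add C$ holds before Lemma~\ref{proj-injAndNakayama}(3) is invoked; the paper's instruction to ``replace $C$ and $I$ by basic modules with the same additive hull'' does not by itself secure this hypothesis (the lemma requires both $I$ and $\nu^- I$ to lie in $\add C$), so your reduction step is a genuine improvement in precision. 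Your explicit verification of $\domdim\Gamma\geq k$ by splicing the sequence from~(T3) with an injective resolution of its final term is likewise a useful spelling-out of a step the paper leaves implicit.
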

\begin{proof}
Let $\shiftmod{k}$ be the $k$-shifted module of an algebra $\Gamma$ with maximal projective-injective summand $\Pi$. Then by Lemma~\ref{proj-injAndNakayama}(1), $\kdual\shiftmod{k}$ is an $I_\Pi$-special $k$-cotilting $\shiftalg{k}$-module, where $I_\Pi=\kdual\Hom_\Gamma(\Pi,T)$. By Lemma~\ref{proj-injAndNakayama}(2), $\nu^-I_\Pi=\Hom_\Gamma(\shiftmod{k},\Pi)$ lies in $\add{\kdual T}$, since $\Pi\in\add{\kdual\Gamma}$.

Conversely, assume $B$, $C$ and $I$ are as in the statement, replacing $C$ and $I$ by basic modules with the same additive closure if necessary. Then $\Gamma=\End_B(C)^{\op}$ has a basic $k$-tilting module $T=\kdual C$, which is $P^I=\Hom_B(C,I)$-special by Lemma~\ref{proj-injAndNakayama}(1). By Lemma~\ref{proj-injAndNakayama}(3), $P^I$ is projective-injective. If $\Pi$ is the maximal projective-injective summand of $\Gamma$, then $\Pi$ is a summand of $T$ since $T$ is $k$-tilting, so $\Pi\in\gen(P^I)$ since $T$ is $P^I$-special. It follows that $\add{P^I}=\add{\Pi}$, and so $T\cong\shiftmod{k}$ is the $k$-shifted module of $\Gamma$ by Proposition~\ref{shifting}.

The second statement is proved dually, reversing the roles of $\Gamma$ and $B$ in Lemma~\ref{proj-injAndNakayama}.
\end{proof}


To close this section, we observe that if $\shiftalg{k}$ is the $k$-shifted algebra of $\Gamma$, then $\gldim{\shiftalg{k}}\leq\gldim{\Gamma}$, thus obtaining a tighter bound on this global dimension than is possible for endomorphism algebras of arbitrary tilting $\Gamma$-modules.

\begin{pro}
\label{gldim-bound}
Assume $\domdim{\Gamma}=d$, let $0\leq k\leq d$, and let $\shiftalg{k}$ and $\coshiftalg{k}$ be the $k$-shifted and $k$-coshifted algebras of $\Gamma$. Then
\begin{gather*}
\gldim{\Gamma}-k\leq\gldim{\shiftalg{k}}\leq\gldim{\Gamma},\\
\gldim{\Gamma}-k\leq\gldim{\coshiftalg{k}}\leq\gldim{\Gamma}.
\end{gather*}
\end{pro}
\begin{proof}
Write $n=\gldim{\Gamma}$, which without loss of generality we may assume to be finite. 
Since $\shiftmod{k}$ is $k$-tilting, it is well-known (see, for example, \cite[Prop.~III.3.4]{Hap}) that
\[n-k\leq\gldim{\shiftalg{k}}.\]
Since $\shiftmod{k}$ is a $k$-th cosyzygy of $\Gamma$ and $\idim{\Gamma}\leq n$, it follows that $\idim{T_k}\leq n-k$. By a result of Gastaminza, Happel, Platzeck, Redondo and Unger \cite[Prop.~2.1]{GHPRU}, we have
\[\gldim{\shiftalg{k}}\leq \pdim{\shiftmod{k}}+\idim{\shiftmod{k}}\leq k+n-k=n.\]
The second pair of inequalities is proved dually, using that $\coshiftmod{k}$ is $k$-cotilting with $\pdim{\coshiftmod{k}}\leq n-k$.
\end{proof}

In particular, this means that either $\gldim{\shiftalg{1}}=\gldim{\Gamma}$ or $\gldim{\shiftalg{1}}=\gldim{\Gamma}-1$. Nguyen, Reiten, Todorov and Zhu have shown that the latter holds if and only if $\pdim(\tau\shiftmod{1})<\gldim{\Gamma}$ \cite[Thm.~3.2.9]{NRTZ}. By \cite[Thm.~3.2]{GHPRU}, this is equivalent to the property that $\Ext^1_\Gamma(\tau\shiftmod{1},\shiftmod{1})=0$.


\section{\texorpdfstring{Shifting and coshifting for minimal $d$-Auslander--Gorenstein algebras}{Shifting and coshifting for minimal d-Auslander--Gorenstein algebras}}
\label{dAG-algebras}

In \cite{CBSa}, Crawley-Boevey and the second author considered the $1$-shifted and $1$-coshifted modules of an Auslander algebra, and noted that these two modules in fact coincide. In this section, we extend this result by showing that the families of shifted and coshifted modules of a general algebra $\Gamma$ with $\domdim{\Gamma}\geq2$ intersect if and only if $\Gamma$ is a minimal $d$-Auslander--Gorenstein algebra, as defined by Iyama and Solberg \cite{IS}, and in this case they even coincide completely.


\begin{dfn}
\label{dAG}
Let $\Gamma$ be a finite-dimensional $\K$-algebra, and let $d\geq1$. We say $\Gamma$ is \emph{$d$-Auslander--Gorenstein} if
\[\id\Gamma\leq d+1\leq\domdim\Gamma,\]
and that it is a \emph{$d$-Auslander algebra} if
\[\gldim\Gamma\leq d+1\leq\domdim\Gamma.\]
\end{dfn}

The definition of a $d$-Auslander algebra is due to Iyama \cite{IyAR} (see also \cite[Defn.~4.1]{IyAC} for more general versions), generalising Auslander for $d=1$ \cite{AQueenMary}.

Note that any $d$-Auslander algebra is minimal $d$-Auslander--Gorenstein, and a minimal $d$-Auslander--Gorenstein algebra is a $d$-Auslander algebra if and only if it has finite global dimension \cite[Prop.~4.8]{IS}. A selfinjective algebra is minimal $d$-Auslander--Gorenstein for all $d$, and so is a $d$-Auslander algebra for all $d$ if and only if it is semisimple. On the other hand, by \cite[Prop.~4.1]{IS}, any minimal $d$-Auslander--Gorenstein algebra $\Gamma$ that is not selfinjective satisfies $\id\Gamma=d+1=\domdim\Gamma$, so $d$ is uniquely determined. Similarly, any $d$-Auslander algebra $\Gamma$ that is not semisimple has $\gldim{\Gamma}=d+1=\domdim{\Gamma}$.

These classes of algebras are also interesting from the point of view of the Morita--Tachikawa correspondence. Given a Morita--Tachikawa triple $(A,E,\Gamma)$, Iyama--Solberg show \cite[Thm.~4.5]{IS} that the algebra $\Gamma$ is minimal $d$-Auslander--Gorenstein if and only if $E$ is $d$-precluster-tilting \cite[Defn.~3.2]{IS}, and an earlier result of Iyama \cite[Thm.~4.5]{IS} shows that $\Gamma$ is a $d$-Auslander algebra if and only if $E$ is $d$-cluster-tilting, meaning that $\add{E}$ is maximal $(d-1)$-orthogonal \cite[Defn.~2.2]{IyHD}.

As promised, we may characterise $d$-Auslander and minimal $d$-Auslander--Gorenstein algebras via their shifted and coshifted modules, as follows.

\begin{thm}
\label{tilt-cotilt-dAG}
Let $\Gamma$ be a finite-dimensional non-selfinjective algebra, and write $d+1=\domdim{\Gamma}$. The following are equivalent:
\begin{itemize}\itemsep0em 
\item[(i)]$\Gamma$ is a minimal $d$-Auslander--Gorenstein algebra,
\item[(ii)]$\shiftmod{k}=\coshiftmod{d+1-k}$ for all $0\leq k\leq d+1$, and
\item[(iii)]there exists $M\in\lmod{\Gamma}$ that is both a shifted and a coshifted module.
\end{itemize}
Under these conditions, $\Gamma$ is a $d$-Auslander algebra if and only if $\gldim{\Gamma}<\infty$.
\end{thm}

\begin{proof}
We start by showing that (i) implies (ii), so assume that $\Gamma$ is minimal $d$-Auslander--Gorenstein. The assumptions on the homological dimensions of $\Gamma$ imply that the regular module has a minimal injective resolution
\[\begin{tikzcd}[column sep=20pt]
0\arrow{r}&\Gamma\arrow{r}&\Pi_0\arrow{r}&\cdots\arrow{r}&\Pi_d\arrow{r}&I\arrow{r}&0
\end{tikzcd}\]
with each $\Pi_j$ projective-injective. The number of indecomposable summands of $I$ is equal to the number of non-injective indecomposable summands of $\Gamma$ (cf.\ \cite[Thm.~5.2]{AR-CM}) and so, assuming without loss of generality that $\Gamma$ is basic, the indecomposable direct summands of $I$ are the indecomposable non-projective injective $\Gamma$-modules, each appearing with multiplicity one. It follows that we have $\kdual\Gamma=I\oplus\Pi$ for $\Pi$ the maximal projective-injective summand of $\Gamma$. Thus, by adding the identity map $\Pi\to\Pi$ to the right-hand end of the above injective resolution, we obtain a sequence
\[\begin{tikzcd}[column sep=20pt]
0\arrow{r}&\Gamma\arrow{r}&\Pi_0\arrow{r}&\cdots\arrow{r}&\Pi_d\arrow{r}&\kdual\Gamma\arrow{r}&0
\end{tikzcd}\]
in which each $\Pi_j$ is projective-injective. This is simultaneously an injective resolution of $\Gamma$ and a projective resolution of $\kdual\Gamma$, and has the appropriate number of projective-injective terms for computing shifted and coshifted modules, so these modules must coincide as claimed.

Since (ii) trivially implies (iii), it remains to show that (iii) implies (i). Let $M\cong\shiftmod{m}\cong\coshiftmod{n}$. Then $M$ has a projective resolution of the form
\[\begin{tikzcd}[column sep=20pt]
0\arrow{r}&\Gamma\arrow{r}&\Pi_{m-1}\arrow{r}&\cdots\arrow{r}&\Pi_0\arrow{r}&M\arrow{r}&0,
\end{tikzcd}\]
where $\Pi_j\in\add{\Pi}$ for each $j$, and an injective resolution of the form
\[\begin{tikzcd}[column sep=20pt]
0\arrow{r}&M\arrow{r}&\Pi_m\arrow{r}&\cdots\arrow{r}&\Pi_{m+n-1}\arrow{r}&\kdual\Gamma\arrow{r}&0,
\end{tikzcd}\]
with $\Pi_j\in\add{\Pi}$ for each $j$. Taking the Yoneda product of these two sequences produces a sequence
\[\begin{tikzcd}[column sep=20pt]
0\arrow{r}&\Gamma\arrow{r}&\Pi_0\arrow{r}&\cdots\arrow{r}&\Pi_{m+n-1}\arrow{r}&\kdual\Gamma\arrow{r}&0,
\end{tikzcd}\]
which shows that $\id\Gamma\leq m+n\leq\domdim{\Gamma}$, i.e.\ that $\Gamma$ is minimal $(m+n-1)$-Auslander--Gorenstein. We also see from this sequence that $m+n=\domdim{\Gamma}=d+1$, else $\kdual\Gamma$ would be projective, contradicting our assumption that $\Gamma$ is not self-injective.

The final statement is the previously noted fact that $d$-Auslander algebras are precisely minimal $d$-Auslander--Gorenstein algebras of finite global dimension \cite[Prop.~4.8]{IS}.
\end{proof}

We remark that this result remains morally true when $\Gamma$ is selfinjective; in this case properties (i)--(iii) hold for any positive integer $d$. Combining it with Proposition~\ref{gen-cogen-characterisation}, we obtain the following corollary, the statement of which is more directly comparable with \cite[Lem.~1.1]{CBSa} for ($1$-)Auslander algebras and \cite[Thm.~2.4.11]{NRTZ} for minimal $1$-Auslander--Gorenstein algebras.

\begin{cor}
\label{democratic-dAG}
An algebra $\Gamma$ is minimal $d$-Auslander--Gorenstein if and only if for some (or equivalently every) $m,n\geq0$ such that $d=m+n-1$, there is a $\Gamma$-module in $\gen_{m-1}(\Pi)\cap\cogen^{n-1}(\Pi)$ that is $m$-tilting and $n$-cotilting,  where $\Pi$ is the maximal projective-injective summand of $\Gamma$.
\end{cor}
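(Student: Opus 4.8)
The plan is to deduce this from Theorem~\ref{tilt-cotilt-dAG} and the propositions feeding into it; the point is that the proof of that theorem already establishes the stronger ``for some / for every'' statement, and Proposition~\ref{gen-cogen-characterisation} is what lets us pass between the two formulations, since it identifies the basic $m$-tilting object of $\gen_{m-1}(\Pi)$ with the shifted module $\shiftmod{m}$ and the basic $n$-cotilting object of $\cogen^{n-1}(\Pi)$ with the coshifted module $\coshiftmod{n}$. It therefore suffices to prove the two implications ``$d$-Auslander--Gorenstein $\Rightarrow$ the condition holds for every admissible $(m,n)$'' and ``the condition holds for some admissible $(m,n)$ $\Rightarrow$ $d$-Auslander--Gorenstein'', as ``for every'' trivially implies ``for some''.

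For the first implication, I would assume $\Gamma$ is $d$-Auslander--Gorenstein and fix an arbitrary pair $m,n\geq0$ with $d=m+n-1$. Then $0\leq m,n\leq d+1$ and $\domdim{\Gamma}\geq d+1$, so $\shiftmod{m}$ and $\coshiftmod{n}$ are both defined; by Proposition~\ref{gen-cogen-characterisation} they are respectively an $m$-tilting object of $\gen_{m-1}(\Pi)$ and an $n$-cotilting object of $\cogen^{n-1}(\Pi)$, and by Proposition~\ref{T=C} they coincide, since $n=d+1-m$. This single module is then a $\Gamma$-module of the required kind for the pair $(m,n)$, which was arbitrary.

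For the second implication, suppose such a module $T$ exists for some $m,n\geq0$ with $d=m+n-1$. Replacing $T$ by its basic version, which retains all the stated properties, we are precisely in the situation treated in the proof of the implication (iii)$\Rightarrow$(i) of Theorem~\ref{tilt-cotilt-dAG}: from the minimal projective resolution of $T$ (all but the last term in $\add\Pi$, since $T\in\gen_{m-1}(\Pi)$ and $\pdim{T}\leq m$) one reads off a splitting $\Gamma\cong P\oplus\Pi$ and hence an exact sequence $0\to\Gamma\to\Pi_0\to\dotsb\to\Pi_{m-1}\to T\to0$; dually, $T\in\cogen^{n-1}(\Pi)$ together with $\idim{T}\leq n$ gives $0\to T\to\Pi_m\to\dotsb\to\Pi_{m+n-1}\to I\to0$ with $I$ injective; and the Yoneda product of these yields an injective resolution of $\Gamma$ of length $m+n$ whose first $m+n$ terms are projective-injective, so $\id\Gamma\leq m+n\leq\domdim{\Gamma}$ and $\Gamma$ is $(m+n-1)=d$-Auslander--Gorenstein. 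The one thing to verify is that this half of the proof of Theorem~\ref{tilt-cotilt-dAG} never uses the standing hypothesis $\domdim{\Gamma}\geq d+1$ of that theorem, but rather produces it; thus it applies in the present setting, where that hypothesis is not assumed in advance.

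I do not anticipate a real obstacle: the corollary is essentially a restatement of Theorem~\ref{tilt-cotilt-dAG}. The only points requiring (mild) care are the degenerate cases $m=0$ and $n=0$, where the module is forced to be $\Gamma$ or $\kdual\Gamma$ and the resolution argument collapses to a direct check against the definitions, and the bookkeeping of the quantifiers, which is handled by routing both implications through the property of being $d$-Auslander--Gorenstein.
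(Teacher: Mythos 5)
Your proposal is correct and follows the same route as the paper, which simply points to the equivalence (i)$\Leftrightarrow$(iii) of Theorem~\ref{tilt-cotilt-dAG} together with Proposition~\ref{gen-cogen-characterisation}. You have usefully spelled out the quantifier bookkeeping and, in particular, explicitly verified that the (iii)$\Rightarrow$(i) half of the theorem's proof does not consume but rather produces the standing hypothesis $\domdim\Gamma\geq d+1$, a point the paper's terse proof leaves implicit.
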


\section{Recollements and homotopy categories}
\label{homotopy-cats}
Given a Morita--Tachikawa triple $(A,E,\Gamma)$, the module category of each shifted and coshifted algebra of $\Gamma$ is naturally part of a recollement, also involving $\lmod{A}$. Before describing and discussing these specific recollements, we will recall some facts about idempotent recollements in general.

\subsection{Idempotent recollements}
Let $B $ be a finite-dimensional algebra, let $e\in B$ be an idempotent element and write $A=eBe$ for the corresponding idempotent subalgebra (sometimes called the \emph{corner} or \emph{boundary algebra}). We obtain from $e$ a diagram
\begin{equation}
\label{recollement}
\begin{tikzcd}[column sep=40pt]
\lmod{B/BeB}\arrow["i" description]{r}&\lmod{B}\arrow["e" description]{r}\arrow[shift right=3,swap]{l}{q}\arrow[shift left=3]{l}{p}&\lmod{A}\arrow[shift right=3,swap]{l}{\ell}\arrow[shift left=3]{l}{r}
\end{tikzcd}
\end{equation}
of six functors, defined by 
\begin{align*}
q & = B/BeB\otimes_{B}-,  & \ell &= Be\otimes_A -,  \\
i & = \Hom_{B/BeB}(B/BeB,-) = B/BeB \otimes_{B/BeB}-,   &  e &= \Hom_{B}(Be,-)=eB\otimes_B-,   \\
p & = \Hom_{B}(B/BeB, -),  & r &= \Hom_{A} (eB,- ).
\end{align*}
Various properties of the above six functors, including that both $(\ell,e)$ and $(e,r)$ are adjoint pairs, mean that this diagram forms a \emph{recollement} of abelian categories; we do not give the general definition here since we will only consider recollements of module categories determined by idempotents as above (cf.\ \cite{PV}). For a $\Gamma$-module $M$, one obtains the same $A$-module $eM$ either by applying the functor $e$ in this diagram, or by multiplying on the left by the idempotent $e$, hence the abuse of notation.

Write $\eta^\ell\colon 1\to e\ell$ and $\varepsilon^\ell\colon\ell e \to 1$ for the unit and counit of the adjunction $(\ell,e)$, and similarly $\eta^r$ and $\varepsilon^r$ for the unit and counit of the adjunction $(e,r)$. A special property of idempotent recollements is that the unit $\eta^\ell$ and the counit $\varepsilon^r$ are natural isomorphisms. This means that there is a natural isomorphism
\[\Hom_B(\ell M,rM)\isoto\Hom_A(M,M),\]
functorial in $M$, determining a canonical map of functors $\zeta\colon\ell\to r$. 
Indeed, on an object $M$, the natural transformation $\zeta$ is given by either of the compositions in the commutative diagram
\[\begin{tikzcd}[column sep=20pt]
&re\ell M\arrow{dr}{r(\eta^\ell_M)}[swap]{\sim}&\\
\ell M\arrow{rr}{\zeta_M}\arrow{ur}{\eta_{\ell M}^r}\arrow{dr}{\sim}[swap]{\ell(\varepsilon^r_M)}&&rM\\
&\ell e rM\arrow{ur}[swap]{\varepsilon_{rM}^\ell}&
\end{tikzcd}\]
Moreover, $\zeta_M$ is characterised by the property that $\varepsilon^r_M\circ(e\zeta_M)\circ\eta^\ell_ M=1_M\colon M\to M$.

Taking the image of $\zeta$ yields a seventh functor $c\colon \lmod{A} \to \lmod{B}$, called the intermediate extension \cite{Ku}, which, like $\ell$ and $r$, is fully faithful. In the sequel, we will implicitly use the natural epimorphism $\ell\to c$ and monomorphism $c\to r$ composing to the natural map $\zeta$.

Since $\ell, r$ and $c$ are fully faithful and $er\cong 1\cong e\ell $, we also have $ec \cong 1$, and we obtain three induced equivalences of categories  
\[\begin{tikzcd}[row sep=10pt]
\Bild{\ell}\\
\Bild{c}&\lmod{A}\arrow{ul}[swap]{\ell}\arrow{l}{c}\arrow{dl}{r}\\
\Bild{r}
\end{tikzcd}\]
with quasi-inverses given by the respective restrictions of the functor $e$.
On the other side of the recollement, the functor $i$ embeds $\lmod{B/BeB}$ into $\lmod{B}$, and since $pi\cong 1\cong qi$ we see that the restrictions of $q$ and of $p$ to $\Bild i$ are both quasi-inverse to $i$.

The recollement \eqref{recollement} determines a \emph{TTF-triple} in $\lmod{B}$, meaning a triple $(\mcX,\mcY,\mcZ)$ of subcategories such that both $(\mcX,\mcY)$ and $(\mcY,\mcZ)$ are torsion pairs, by 
\[ 
{\TTF}(e)=(\mcX(e),\mcY(e),\mcZ(e)):= (\Ker q, \Ker e , \Ker p).
\]
We now give some alternative descriptions of the kernels and images of the functors in our recollement \eqref{recollement}, including the categories $\Ker{q}$ and $\Ker{p}$ appearing in this TTF-triple, in terms of the categories $\gen_k(X)$ and $\cogen^k(X)$ associated to $X\in\lmod{B}$ as in Definition~\ref{gen-cogen}.

\begin{lem} \label{ImageAndKernel}
For $B$ and $e$ as in \eqref{recollement}, write $P=Be$ and $I=\nu P=\kdual(eB)$. We have
\[
\begin{aligned}
 \Ker q &= \gen (P),  &&& \Bild \ell& =\gen_1 (P), \\
\Ker p &= \cogen (I), &&& \Bild r &=\cogen^1 (I).
\end{aligned}
\]
Moreover, the image of the intermediate extension $c=\Bild(\ell\to r)$ is given by
\[ 
\Bild c = \Ker p \cap \Ker q = \gen (P) \cap \cogen (I).
\]
\end{lem}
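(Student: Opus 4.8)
The plan is to prove the set equality $\Bild c = \Ker p \cap \Ker q$ directly; the remaining identification $\Ker p \cap \Ker q = \gen(P)\cap\cogen(I)$ is then immediate from the first two displayed equalities of the lemma, which I take as already established.

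For the inclusion $\Bild c \subseteq \Ker p \cap \Ker q$, I would first record that $q\ell = 0$ and $pr = 0$; both follow from $e^2 = e$, which forces $Be\subseteq BeB$ and $eB\subseteq BeB$, so that $(B/BeB)\otimes_B Be = 0$ and $e(B/BeB) = 0$. Hence $\Bild\ell\subseteq\Ker q$ and $\Bild r\subseteq\Ker p$. Now for $N\in\lmod A$ the module $c(N)$ is a quotient of $\ell(N)\in\Ker q$ via the natural epimorphism $\ell\to c$; since $\Ker q$ is the torsion class of the torsion pair $(\Ker q,\Ker e)$ it is closed under quotients, so $c(N)\in\Ker q$. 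Dually, via the monomorphism $c\to r$, $c(N)$ is a subobject of $r(N)\in\Ker p$, and $\Ker p$, being a torsion-free class, is closed under subobjects, so $c(N)\in\Ker p$. This gives $\Bild c\subseteq\Ker q\cap\Ker p$.

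For the reverse inclusion I would take $M\in\Ker q\cap\Ker p$, put $N=eM$, and aim to show $M\cong c(N)$. The first step is to check that the counit $\epsilon_M\colon\ell eM\to M$ of $(\ell,e)$ is epic: its cokernel $Q$ satisfies $qQ=0$ (apply the right exact functor $q$ and use $q\ell=0$ and $qM=0$) and $eQ=0$ (apply the exact functor $e$ and use that $e\epsilon_M$ is an isomorphism, by a triangle identity together with $e\ell\cong 1$), so $Q\in\Ker q\cap\Ker e=0$. Dually, using $pM=0$ and exactness of $e$, the unit $\eta_M\colon M\to reM$ of $(e,r)$ is monic, its kernel lying in $\Ker e\cap\Ker p=0$. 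The second step invokes the description, recalled in the paragraph preceding the lemma, of the canonical transformation $\ell\to r$ as the composite of the counit $\ell e\to\id$ with the unit $\id\to re$: evaluated at $M$ (with $N=eM$) this says that the canonical map $\theta_N\colon\ell N\to rN$ factors as $\ell N=\ell eM\xrightarrow{\epsilon_M}M\xrightarrow{\eta_M}reM=rN$. Since $c(N)=\Bild\theta_N$ by definition, $\epsilon_M$ is epic and $\eta_M$ is monic, one gets $c(N)=\eta_M(\epsilon_M(\ell eM))=\eta_M(M)\cong M$, so $M\in\Bild c$, as desired.

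I expect the one delicate point to be the compatibility invoked in the second step: that the abstractly defined canonical map $\ell\to r$ (coming from $\Hom_B(\ell M,rM)\cong\Hom_A(M,M)$) genuinely coincides with the pointwise composite of counit and unit, so that $\theta_{eM}$ really runs through $M$. This is a formal consequence of the adjunction triangle identities together with $e\ell\cong 1\cong er$, and is precisely the assertion already made in the text before the lemma; the rest of the argument only uses exactness of $e$ and the defining properties of the torsion pairs $(\Ker q,\Ker e)$ and $(\Ker e,\Ker p)$. Alternatively, the reverse inclusion could be deduced from the characterisation of $\Bild c$ as the subcategory of modules having no nonzero sub- or quotient object in $\Ker e$ (cf.\ \cite{Ku}), a condition which one checks is equivalent to membership of $\Ker q\cap\Ker p$.
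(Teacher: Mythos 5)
You have only addressed the ``moreover'' part of the lemma: you explicitly take the four equalities $\Ker q=\gen(P)$, $\Bild\ell=\gen_1(P)$, $\Ker p=\cogen(I)$, $\Bild r=\cogen^1(I)$ as already established, whereas they are part of what the lemma asserts. The paper actually carries out those parts: it cites \cite{APT}*{Lem.~3.1} for $\Bild\ell=\gen_1(P)$ and $\Bild r=\cogen^1(I)$, but gives a real argument for $\Ker q=\gen(P)$ (using that the counit $\ell eX\to X$ is epic when $qX=0$, then pushing a projective cover of $eX$ through $\ell$, plus the reverse containment $\gen(P)\subseteq\Ker q$ from $q\ell A=0$ and right exactness of $q$), and a dual argument for $\Ker p=\cogen(I)$. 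So your proposal leaves a genuine gap in exactly the places where the paper does its own work.

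On the other hand, for the equality $\Bild c=\Ker p\cap\Ker q$ the paper simply cites \cite{FP}*{Prop.~4.11}, while your argument is a correct direct proof of it. Both inclusions check out: $q\ell=0$ and $pr=0$ are elementary from $e^2=e$; closure of the torsion class $\Ker q$ under quotients and the torsion-free class $\Ker p$ under subobjects gives $\Bild c\subseteq\Ker q\cap\Ker p$; and the use of the TTF-triple $(\Ker q,\Ker e,\Ker p)$ to show that $\coKer\epsilon_M$ and $\Ker\eta_M$ vanish for $M\in\Ker q\cap\Ker p$, combined with the factorisation $\theta_{eM}=\eta_M\circ\epsilon_M$ recalled before the lemma, correctly yields $M\cong c(eM)$. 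So this portion of your argument is not only correct but more self-contained than the paper's. To turn your proposal into a complete proof you would need to supply the arguments (or at least the citations) for the four equalities you currently assume.
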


\begin{proof} For the equalities $\Bild \ell = \gen_1 (P)$ and $\Bild r = \cogen^1(I)$, see \cite[Lem.~3.1]{APT}. By \cite[Lem/Def.~2.4]{CBSa}, if $X\in \Ker q$ then the counit map $\ell e X\to X$ is an epimorphism. Take a projective cover $Q\to eX$; since $\ell $ preserves epimorphisms we obtain an epimorphism $\ell Q\to \ell e X \to X$. Since $\ell A = P$, we have $\ell Q \in \add P$ and thus $X\in \gen (P)$. Conversely, $\gen (P) \subseteq \Ker q$ since $q P= q\ell A =0$ and $q$ preserves epimorphisms. Using instead \cite[Lem/Def.~ 2.3]{CBSa}, one similarly proves that $\Ker p = \cogen (I)$. Finally, the equality $\Bild c = \Ker p \cap \Ker q $ is the first statement of \cite[Prop.~4.11]{FP}. 
\end{proof}

Now let $(A,E,\Gamma)$ be a Morita--Tachikawa triple, with $\Pi$ the maximal projective-injective summand of $\Gamma$. Recall from the Morita--Tachikawa correspondence that $A\cong\End_\Gamma(\Pi)^{\op}$. If $T$ is any tilting (or cotilting) $\Gamma$-module, we must have $\Pi\in\add{T}$. It follows that there is an idempotent $e\in B=\End_\Gamma(T)^{\op}$, given by projection onto the summand $\Pi$ of $T$, such that
\[eBe=\End_\Gamma(\Pi)^{\op}\cong A.\]
Thus we get a recollement as in \eqref{recollement}. In particular, this holds for the shifted and coshifted algebras $\shiftalg{k}$ and $\coshiftalg{k}$ of $\Gamma$. In this section, we explain how these different recollements are related, for different values of $k$, and give an explicit formula for the intermediate extension functor in each case.

\subsection{Recollements for shifted and coshifted algebras}
\label{shift-recollements}

We first introduce some notation for our preferred idempotents. Let $\Gamma$ be a finite-dimensional algebra and $k\leq\domdim\Gamma$. We denote by $e_k$ the idempotent of the $k$-th shifted algebra $\shiftalg{k}$ of $\Gamma$ given by projection onto $\Pi\in\add{\shiftmod{k}}$, and by $e^k$ the idempotent of the $k$-th coshifted algebra $\coshiftalg{k}$ given by projection onto $\Pi\in\add{\coshiftmod{k}}$.

\begin{rem}
\label{two-idempotents}
The reader is warned that while we have natural isomorphisms $B_0\isoto\Gamma\isoto B^0$, the composition does not take $e_0$ to $e^0$. Rather, identifying $e_0$ and $e^0$ with their respective image and preimage in $\Gamma$, we have isomorphisms $\Gamma e_0\cong\Pi\cong\kdual(e^0\Gamma)$, so that $e_0$ can be read off from the top of $\Pi$, whereas $e^0$ is read off from the socle.
\end{rem}



The algebras $e_k\shiftalg{k}e_k$ and $e^k\coshiftalg{k}e^k$ are all isomorphic to $A:=\End_\Gamma(\Pi)^{\op}$, so $\lmod{A}$ appears on the right-hand side of all of our recollements. In the case of the quotient algebras $\shiftalg{k}/\shiftalg{k}e_k\shiftalg{k}$ and $\coshiftalg{k}/\coshiftalg{k}e^k\coshiftalg{k}$ appearing on the other side of the recollements, we have the following.

\begin{lem}
\label{quotient-algebras}
For all $0\leq k\leq \domdim{\Gamma}$ we have isomorphisms
\begin{align*}
\shiftalg{k}/\shiftalg{k}e_k\shiftalg{k} &\cong \Gamma /\Gamma e_0\Gamma,\\
\coshiftalg{k}/\coshiftalg{k}e^k\coshiftalg{k}  &\cong \Gamma /\Gamma e^0\Gamma,
\end{align*}
induced by taking syzygies and cosyzygies.
\end{lem}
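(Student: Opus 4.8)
The goal is to identify the quotient algebra $\shiftalg{k}/\shiftalg{k}e_k\shiftalg{k}$ with $\Gamma/\Gamma e_0\Gamma$ (the second statement is dual). The plan is to exploit the recollement structure: the quotient $\shiftalg{k}/\shiftalg{k}e_k\shiftalg{k}$ is the endomorphism algebra of an object describing the ``complement'' of $\Pi$ inside $\shiftmod{k}$, and the restriction of the recollement data to the non-projective-injective part is controlled by syzygy/cosyzygy. More precisely, write $\shiftmod{k} = \Pi \oplus T_k^\circ$ where $T_k^\circ$ is the maximal non-projective-injective summand (cf.\ the proof of Proposition~\ref{domdim-infty}), and similarly $\Gamma = \Pi \oplus P^\circ$ where $P^\circ$ is the maximal non-injective summand; note $\Gamma e_0 \cong \Pi$ so that $\Gamma/\Gamma e_0\Gamma \cong \End_\Gamma(\Gamma)^{\op}/(\text{maps factoring through }\add\Pi)$, which is the \emph{stable endomorphism algebra} $\underline{\End}_\Gamma(P^\circ)^{\op}$ of $P^\circ$ modulo morphisms factoring through $\add\Pi$. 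Likewise $\shiftalg{k}/\shiftalg{k}e_k\shiftalg{k} \cong \underline{\End}_\Gamma(T_k^\circ)^{\op}$ modulo maps factoring through $\add\Pi$, since $\shiftalg{k}e_k$ corresponds to the projective $\shiftalg{k}$-module $\Hom_\Gamma(\shiftmod{k},\Pi)$.

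**Key steps.** First I would record the standard fact that for any idempotent $e\in B = \End_\Gamma(T)^{\op}$ given by projection onto a summand $\Pi$ of $T$, the quotient $B/BeB$ is the quotient of $\End_\Gamma(T)^{\op}$ by the two-sided ideal of endomorphisms factoring through $\add\Pi$; equivalently it is $\End_{\lmod\Gamma/[\add\Pi]}(T)^{\op}$ in the appropriate additive quotient category, and since $\Pi$ itself dies there, this equals $\End_{\lmod\Gamma/[\add\Pi]}(T^\circ)^{\op}$ where $T^\circ$ is the maximal summand of $T$ with no summand in $\add\Pi$. Applying this to $B = \Gamma$ with $e = e_0$ gives $\Gamma/\Gamma e_0\Gamma \cong \End_{\lmod\Gamma/[\add\Pi]}(P^\circ)^{\op}$, and applying it to $B = \shiftalg{k}$ with $e = e_k$ gives $\shiftalg{k}/\shiftalg{k}e_k\shiftalg{k} \cong \End_{\lmod\Gamma/[\add\Pi]}(T_k^\circ)^{\op}$. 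Second, and this is the crux, I would show that the cosyzygy operation induces an isomorphism of $\K$-algebras
\[
\End_{\lmod\Gamma/[\add\Pi]}(P^\circ)^{\op} \isoto \End_{\lmod\Gamma/[\add\Pi]}(T_k^\circ)^{\op}.
\]
By the proof of Proposition~\ref{shifting} / Proposition~\ref{gen-cogen-characterisation}, $T_k^\circ$ is obtained from $P^\circ$ by the exact sequence $0 \to P^\circ \to \Pi_0 \to \cdots \to \Pi_{k-1} \to T_k^\circ \to 0$ with all $\Pi_j \in \add\Pi$ projective-injective, i.e.\ $T_k^\circ = \Omega^{-k}P^\circ$ relative to resolutions by $\add\Pi$. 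The claim is that this $k$-fold cosyzygy, which descends to an equivalence on the relevant quotient/stable categories because $\add\Pi$ consists of objects that are simultaneously ``projective and injective'' for the class of short exact sequences with middle term in $\add\Pi$ (this is exactly the self-injective-like behaviour emphasised in the discussion before Remark~\ref{selfinjective}), yields the stated algebra isomorphism. I would make this precise by a dimension-shift / long-exact-sequence argument: applying $\Hom_\Gamma(-, T_k^\circ)$ and $\Hom_\Gamma(P^\circ, -)$ to the defining sequences and using that $\Ext^{>0}_\Gamma(\Pi, \Pi') $-type terms vanish on projective-injectives, one gets $\underline{\Hom}_\Gamma(P^\circ, P^\circ) \cong \underline{\Hom}_\Gamma(\Omega^{-k}P^\circ, \Omega^{-k}P^\circ)$ compatibly with composition, where $\underline{\Hom}$ denotes Hom modulo maps through $\add\Pi$.

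**Main obstacle.** The routine parts are the ideal-theoretic descriptions of the quotient algebras; the real work is verifying that the $k$-fold cosyzygy is functorial and an algebra isomorphism on the stable category $\lmod\Gamma/[\add\Pi]$ — one must check that a morphism $P^\circ \to P^\circ$ lifts through the $\add\Pi$-coresolutions to a morphism $T_k^\circ \to T_k^\circ$ well-defined modulo $[\add\Pi]$, that this is independent of all choices, and that it respects composition and identities. The subtlety is that $\add\Pi$ is not the full category of injectives, so one cannot simply invoke the stable module category of a self-injective algebra; instead one works with the exact structure whose conflations are the short exact sequences appearing in the minimal injective resolution of $\Gamma$ (equivalently, $\add\Pi$-approximation sequences), and checks that $\add\Pi$ is projective-injective and has ``enough'' of both for the objects $P^\circ, T_1^\circ, \ldots, T_k^\circ$ that actually occur. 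Alternatively — and perhaps more cleanly — one can avoid the stable-category language entirely and argue directly with the recollement: the functor $q = \shiftalg{k}/\shiftalg{k}e_k\shiftalg{k} \otimes_{\shiftalg{k}} -$ identifies $\lmod{\shiftalg{k}/\shiftalg{k}e_k\shiftalg{k}}$ with $\lmod{\Gamma}/\gen(\Pi)$ via Lemma~\ref{ImageAndKernel}, and since the derived equivalences between $\Gamma$ and $\shiftalg{k}$ (noted after Remark~\ref{opposite}) restrict suitably, the two quotient categories $\lmod\Gamma/\gen(\Gamma e_0)$ and $\lmod{\shiftalg{k}}/\gen(\shiftalg{k}e_k)$ are equivalent, hence so are the algebras; I would then only need to check the equivalence is induced by (co)syzygy, which again reduces to the dimension-shift computation above. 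I expect to present the direct computational argument with the defining $\add\Pi$-coresolutions, as it is the most self-contained given what the paper has already established.
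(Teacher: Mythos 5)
Your approach is essentially the paper's: both identify each quotient algebra with the endomorphism ring of the relevant (co)shifted module in the additive quotient $\lmod\Gamma/[\add\Pi]$, and both deduce the isomorphism from the fact that (co)syzygy gives mutually inverse equivalences $\gen(\Pi)/[\add\Pi]\leftrightarrow\cogen(\Pi)/[\add\Pi]$. The one difference is that you propose to verify the $k$-fold cosyzygy isomorphism directly via a dimension-shift argument, whereas the paper cites a result of Auslander--Reiten for the one-step equivalence and then inducts on $k$ --- this citation cleanly disposes of the functoriality and well-definedness issues you correctly flag as the ``main obstacle.''
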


\begin{proof}
The idempotents $e_k$ are chosen such that there is an isomorphism
\[\shiftalg{k}/\shiftalg{k}e_k\shiftalg{k} \cong \End_{\lmod{\Gamma}/\add{\Pi}}(\shiftmod{k}).\]
Moreover, since $\Pi$ is projective-injective, \cite[Thm.~5.2]{AR-CM} provides mutually inverse equivalences
\[\Omega\colon\gen(\Pi)/\add{\Pi}\stackrel{\sim}{\longleftrightarrow}\cogen(\Pi)/\add{\Pi}\colon\Omega^-,\]
where $\Omega(X)$ is the kernel of a minimal projective cover of $X$, and $\Omega^-(Y)$ is the cokernel of a minimal injective hull of $Y$; for $X\in\gen(\Pi)$, such a projective cover is a minimal left $\add{\Pi}$-approximation as referred to in \cite[Thm.~5.2]{AR-CM}, and the corresponding statement holds for $Y\in\cogen(\Pi)$.

Now, noting that for $k=0$ there is nothing to prove, the result for $1\leq k\leq\domdim{\Gamma}$ follows inductively using the fact that, by construction, $\shiftmod{k}\in\gen(\Pi)$ and $\Omega(\shiftmod{k})$ agrees with $\shiftmod{k-1}$ up to a summand in $\add{\Pi}$, i.e.\ $\Omega(\shiftmod{k})\cong \shiftmod{k-1}\in\lmod{\Gamma}/\add{\Pi}$. The result for $\coshiftmod{k}$ is proved dually.

\end{proof}

It follows from Lemma~\ref{quotient-algebras} that the families of shifted and coshifted modules each provide a family of recollements, such that the left-hand side of the recollement is constant in each family, and the right-hand side is constant across both families. More precisely, for each $0\leq k\leq \domdim{\Gamma}$, we get a pair of recollements as follows.
\begin{equation}
\label{shifted-recollements}
\begin{tikzcd}[column sep=50pt,row sep=15pt]
\lmod{\Gamma/\Gamma e_0\Gamma}\arrow["i_k" description]{r}&\lmod{\shiftalg{k}}\arrow["e_k" description, shift left=3]{dr}\arrow[shift right=3,swap]{l}{q_k}\arrow[shift left=3]{l}{p_k}\\
&&\lmod{A}\arrow[shift right=6,swap]{ul}{\ell_k}\arrow{ul}{r_k}\arrow[swap]{dl}{\ell_k}\arrow[shift left=6]{dl}{r^k}\\
\lmod{\Gamma/\Gamma e^0\Gamma}\arrow["i^k" description]{r}&\lmod{\coshiftalg{k}}\arrow["e^k" description,shift right=3]{ur}\arrow[shift right=3,swap]{l}{q^k}\arrow[shift left=3]{l}{p^k}
\end{tikzcd}
\end{equation}
We denote the intermediate extension functors in these recollements by $c_k$ and $c^k$ respectively.
\subsection{Homotopy categories}
We now turn to the problem of computing the intermediate extension functor in each recollement from \eqref{shifted-recollements}. To do this, it will be useful to give a new description of the shifted and coshifted algebras as endomorphism algebras in the bounded homotopy category of $A$-modules, rather than in the category of $\Gamma$-modules, generalising a result of Crawley-Boevey and the second author \cite[Prop.~5.5]{CBSa} in the case that $\Gamma$ is an Auslander algebra. Our proof is also somewhat simpler and more conceptual, using only standard homological algebra.

We begin with the following general considerations. Let $A$ be a finite-dimensional algebra, $E\in\lmod{A}$, and $\Gamma=\End_A(E)^{\op}$. The bounded homotopy categories $\homot[\bound]{\proje{\Gamma}}$ and $\homot[\bound]{\injec{\Gamma}}$ of complexes of projective and injective $\Gamma$ modules respectively admit tautological functors to the unbounded derived category $\dercat{\Gamma}$, equivalences onto their images, which we treat as identifications. These subcategories may be characterised intrinsically as the full subcategories of $\dercat{\Gamma}$ on the compact and cocompact objects (in the context of additive categories) respectively. Extending the Yoneda equivalences
\begin{align*}
\Hom_A(E,-)&\colon\add{E}\isoto\proje{\Gamma},\\
\kdual\Hom_A(-,E)&\colon\add{E}\isoto\injec{\Gamma}
\end{align*}
to complexes, one sees that both of these subcategories of $\dercat{\Gamma}$ are equivalent to the full subcategory $\thick(E)$ of $\homot[\bound]{A}$, i.e.\ the smallest triangulated subcategory of the homotopy category $\homot[\bound]{A}$ closed under direct summands and containing (the stalk complex) $E$.

Now let $F\colon\mcT\isoto\dercat{\Gamma}$ be any equivalence of triangulated categories. Using the intrinsic description of $\homot[\bound]{\proje{\Gamma}}$ and $\homot[\bound]{\injec{\Gamma}}$ above, we see that $F$ induces respective equivalences from the subcategories of compact and cocompact objects of $\mcT$ to $\homot[\bound]{\proje{\Gamma}}$ and $\homot[\bound]{\injec{\Gamma}}$ respectively, and thus realises $\thick{E}$ as a full subcategory of $\mcT$ (in two ways). This holds in particular when $\mcT=\dercat{B}$ for some algebra $B$ derived equivalent to $\Gamma$, such as the endomorphism algebra of a tilting or cotilting $\Gamma$-module.

Given a derived equivalence $\dercat{B}\isoto\dercat{\Gamma}$, it follows from Rickard's Morita theory for derived categories \cite{RickMT} that the image of the stalk complex $B$ in $\homot[\bound]{\proje{\Gamma}}$ is a tilting complex for $\Gamma$ with endomorphism algebra $B$. The preimage of this complex under the Yoneda equivalence is an object of $\thick{E}\subseteq\homot[\bound]{A}$, again with endomorphism algebra $B$. Similarly, the image of $\kdual B\in\homot[\bound]{\injec{B}}$ in $\homot[\bound]{\injec{\Gamma}}$ is a cotilting complex, related by the dual Yoneda equivalence to another object of $\thick{E}$ with endomorphism algebra $B$.

Our conclusion is that when $\Gamma$ is the endomorphism algebra of an $A$-module $E$ (or more generally an object $E\in\homot[\bound]{A}$), any algebra $B$ derived equivalent to $\Gamma$ must also appear as an endomorphism algebra in $\thick{E}\subseteq\homot[\bound]{A}$. In general, $B$ need not be an endomorphism algebra in $\lmod{A}$. When $E$ is a generator-cogenerator and $B$ is one of the shifted or coshifted algebras of $\Gamma$, we may compute the relevant objects of $\thick{E}$ explicitly, and obtain a particularly straightforward answer.

\begin{thm}
\label{HtoB}
Let $(A,E,\Gamma)$ be a Morita--Tachikawa triple with all objects basic, and let $0\leq k\leq\domdim{\Gamma}$. 
\begin{itemize}
\item[(a)]Write $E^k=(P_{k-1}\to\cdots\to P_0\to E)\oplus A[k]\in\homot[\bound]{A}$, 
where the first summand denotes the complex whose non-zero part is given by the first $k$ terms of a minimal projective resolution of $E$, with $E$ in degree $0$, and the second denotes the stalk complex with $A$ in degree $-k$. Then
\[\coshiftalg{k}\cong\End_{\homot[\bound]{A}}(E^k)^{\op},\]
with the idempotent $e^k\in\coshiftalg{k}$ corresponding to projection onto the summand $A[k]$.
\item[(b)]Write $E_k=(E\to Q_0\to\cdots\to Q_{k-1})\oplus\kdual A[-k]$, where the first summand denotes the complex whose non-zero part is given by the first $k$ terms of a minimal injective resolution of $E$, with $E$ in degree $0$, and the second denotes the stalk complex with $\kdual A$ in degree $k$. Then
\[\shiftalg{k}\cong\End_{\homot[\bound]{A}}(E_k)^{\op},\]
with the idempotent $e_k\in\shiftalg{k}$ corresponding to projection onto the summand $\kdual A[-k]$.
\end{itemize}
\end{thm}
\begin{proof}
As usual, we only prove (a), since (b) is dual. By definition, $\coshiftalg{k}$ is the endomorphism algebra of the $k$-cotilting $\Gamma$-module $\coshiftmod{k}$, so that the image of $\kdual\coshiftalg{k}$ in $\homot[\bound]{\injec{\Gamma}}$ is given by an injective resolution of $\coshiftmod{k}$. By the preceding discussion, we need only show that the dual Yoneda equivalence maps $E^k$ to such a resolution (up to a degree shift). But this equivalence sends $E^k$ to the complex
\begin{equation}
\label{coshift-injres}
\begin{tikzcd}[column sep=20pt]
\Pi^{k-1}\oplus\Pi\arrow{r}&\Pi^{k-2}\arrow{r}&\cdots\arrow{r}&\Pi^0\arrow{r}&\kdual\Gamma
\end{tikzcd}
\end{equation}
where
\[\begin{tikzcd}[column sep=20pt]
\Pi^{k-1}\arrow{r}&\Pi^{k-2}\arrow{r}&\cdots\arrow{r}&\Pi^0\arrow{r}&\kdual\Gamma\arrow{r}&0,
\end{tikzcd}\]
begins a minimal projective resolution of $\kdual\Gamma$, and $\Pi$ denotes as usual the maximal projective-injective summand of $\Gamma$. This complex is exact except in degree $-k$, and by comparing to the definition of the coshifted modules, we see that its cohomology in this degree is precisely $C^k$, as required.
\end{proof}
\begin{rem}
When $\Gamma$ is an Auslander algebra, so $A$ is representation-finite and $\add{E}=\lmod{A}$, the category $\add{E^1}$ is equivalent to the category $\mcH$ from \cite[\S3]{CBSa}. Moreover, $\coshiftmod{1}=\shiftmod{1}$, and so Theorem~\ref{HtoB}(a) recovers \cite[Prop.~5.5]{CBSa} in this case. In contrast to the proof given in \cite{CBSa}, we did not have to identify the module over $\End_{\homot[\bound]{A}}(E^1)^{\op}$ corresponding to the $\coshiftalg{1}$-module $\kdual\coshiftmod{1}$.
\end{rem}

\subsection{Intermediate extensions}
As in \cite{CBSa}, the advantage of describing $\coshiftalg{k}$ as the endomorphism algebra of a complex, as in Theorem~\ref{HtoB}, is that it allows for convenient descriptions of some of the functors in the recollements \eqref{shifted-recollements}, as we will demonstrate in this section. We will also state the dual results for $\shiftalg{k}$.
Throughout, we treat the isomorphisms of Theorem~\ref{HtoB}, as well as the natural isomorphisms
\[\End_{\homot[\bound]{A}}(A[k])^{\op}\cong A\cong \End_{\homot[\bound]{A}}(\kdual A[-k])^{\op}\]
with which they are compatible, as identifications. We also write $f^k\colon A\oplus P_{k-1}\to P_{k-2}$ for the leftmost non-zero map in the complex $E^k$ (see Theorem~\ref{HtoB}(a)); this notation includes, in the case $k=1$, the convention that $P_{-1}=E$ (and so this object is typically not projective).

\begin{lem}
\label{lrdescription}For $M\in\lmod{A}$, we have
\[
\begin{aligned}
\ell^k (M) &\cong \Hom_{\homot[\bound]{A}}(E^k,A[k])\otimes_AM,\\
r^k(M)&\cong \Hom_A(\Ker f^k, M)\cong\Hom_{\dercat[\bound]{A}}(E^k,M[k]).
\end{aligned}
\]
\end{lem}
\begin{proof}
Under the isomorphisms of Theorem~\ref{HtoB}, we have
\[e^k=\Hom_{\coshiftalg{k}}(\Hom_{\homot[\bound]{A}}(E^k,A[k]),-)=\Hom_{\homot[\bound]{A}}(A[k],E^k)\otimes_{\coshiftalg{k}}-,\]
recalling that these isomorphisms identify the idempotent $e^k$ with projection onto the summand $A[k]$ of $E^k$. Thus the adjoints to $e^k$ arise from usual tensor-hom adjunction, which immediately gives the required formula
\[\ell^k (M) \cong \Hom_{\homot[\bound]{A}}(E^k,A[k])\otimes_AM,\]
for the left adjoint, and the formula
\[r^k(M)\cong\Hom_A(\Hom_{\homot[\bound]{A}}(A[k],E^k),M)\]
for the right. By a  standard computation in the homotopy category, we have an $A$-module isomorphism
\[\Hom_{\homot[\bound]{A}}(A[k],E^k)\cong\Ker{f^k},\]
providing the first claimed formula for $r^k$. The second then follows by observing, directly from the definition, that $E^k$ is isomorphic to the stalk complex $\Ker{f^k}[k]$ in $\dercat[\bound]{A}$.
\end{proof}

\begin{lem}
\label{ldescription2}
For $M\in\lmod{A}$ and $k\geq 2$, we have
\[\ell^k (M) = \coKer (\Hom_A(P_{k-2}, M) \to \Hom_A(A\oplus P_{k-1}, M) )
 =\Hom_{\homot[\bound]{A}}(E^k,M[k]).\]
\end{lem}
\begin{proof}
Let $P^M_1\to P^M_0\to M\to0$ be a projective presentation of $M$, whence we obtain the exact sequence
\[\begin{tikzcd}[column sep=10pt]
\Hom_{\homot[\bound]{A}}(E^k,A[k])\otimes_AP^M_1\arrow{r}&\Hom_{\homot[\bound]{A}}(E^k,A[k])\otimes_AP^M_0\arrow{r}&\Hom_{\homot[\bound]{A}}(E^k,A[k])\otimes_AM\arrow{r}&0.
\end{tikzcd}\]
We have $\Hom_{\homot[\bound]{A}}(E^k,A[k])\otimes_AM=\ell^k(M)$ by Lemma~\ref{lrdescription}, and there are natural isomorphisms $\Hom_{\homot[\bound]{A}}(E^k,A[k])\otimes_AP^M_i\isoto\Hom_{\homot[\bound]{A}}(E^k,P^M_i[k])$, 
identifying $\ell^k(M)$ with the cokernel of the map $\Hom_{\homot[\bound]{A}}(E^k,P^M_1[k])\to\Hom_{\homot[\bound]{A}}(E^k,P^M_0[k])$.

For any $N\in\lmod{A}$, we may compute $\Hom_{\homot[\bound]{A}}(E^k,N[k])$ via the exact sequence
\[\begin{tikzcd}[column sep=20pt]
\Hom_A(P_{k-2},N)\arrow{r}&\Hom_A(A\oplus P_{k-1},N)\arrow{r}&\Hom_{\homot[\bound]{A}}(E^k,N[k])\arrow{r}&0.
\end{tikzcd}\]
From this observation and our projective presentation of $M$, we may construct the commutative diagram
\[\begin{tikzcd}
\Hom_A(P_{k-2},P^M_1)\arrow{r}\arrow{d}&\Hom_A(P_{k-2},P^M_0)\arrow{r}\arrow{d}&\Hom_A(P_{k-2},M)\arrow{r}\arrow{d}&0\\
\Hom_A(A\oplus P_{k-1},P^M_1)\arrow{r}\arrow{d}&\Hom_A(A\oplus P_{k-1},P^M_0)\arrow{r}\arrow{d}&\Hom_A(A\oplus P_{k-1},M)\arrow{r}\arrow{d}&0\\
\Hom_{\homot[\bound]{A}}(E^k,P^M_1[k])\arrow{r}\arrow{d}&\Hom_{\homot[\bound]{A}}(E^k,P^M_0[k])\arrow{r}\arrow{d}&\Hom_{\homot[\bound]{A}}(E^k,M[k])\arrow{r}\arrow{d}&0\\
0&0&0
\end{tikzcd}\]
with exact columns. Since both $A\oplus P_{k-1}$ and $P_{k-2}$ are projective---the latter because of our assumption that $k\geq2$---the first two rows are exact. Exactness of the third row then follows from a variant of the snake lemma, and so $\ell^k(M)=\Hom_{\homot[\bound]{A}}(E^k,M[k])$ as claimed.
\end{proof}

\begin{thm}
\label{cdescription}
For $M\in\lmod{A}$, the intermediate extension $c^k(M)\in\lmod{\coshiftalg{k}}$ is given by 
\[c^k(M) = \coKer (\Hom_A(\Bild f^k, M) \to \Hom_A(A\oplus P_{k-1}, M)).\]
\end{thm}
\begin{proof}
We first deal separately with the case $k=1$. By \cite[Lem.~4.2]{CBSa}, \[c^1(M)=\coKer(\Hom_A(E,M)\to\Hom_A(P_0\oplus A,M)),\]
and $\Bild{f^1}=E$ in this case since $P_0\to E$ is a projective cover, giving the desired result.

Assume now that $k\geq2$, and denote by $(f^k)^*\colon \Hom_A(P_{k-2},M)\to\Hom_A(A\oplus P_{k-1},M)$ the map induced by $f^k$, so that $\ell^k(M)=\coKer{(f^k)^*}$ by Lemma~\ref{ldescription2}. Since $(f^k)^*$ factors through the inclusion $\Hom_A(\Bild{f^k},M)\to\Hom_A(A\oplus P_{k-1},M)$, we obtain a map $\zeta'\colon\ell^k(M)\to r^k(M)$ via the composition
\[
\ell^k (M) = \coKer{(f^k)^*} \surj \Hom_A(A\oplus P_{k-1}, M) /\Hom_A(\Bild f^k,M) \inj \Hom_A(\Ker f^k, M)= r^k(M),
\]
which we claim is the natural transformation $\zeta^k\colon\ell^k\to r^k$ evaluated on $M$. Recalling that $e^k\in\End_{\homot[\bound]{A}}(E^k)^{\op}$ is projection onto $A[k]$, one can check that $e^k(\zeta')$ is the identity on $\Hom_A(A,M)$. Under our isomorphism $\coshiftalg{k}\cong\End_{\homot[\bound]{A}}(E^k)$ from Theorem~\ref{HtoB}, and the identification of $\ell^k$ and $r^k$ with their descriptions in Lemmas~\ref{lrdescription} and \ref{ldescription2}, the unit $\varepsilon$ of the adjunction $(\ell^k,e^k)$ is identified with the natural isomorphism $M\isoto\Hom_A(A,M)$, and the counit $\eta$ of the adjunction $(e^k,r^k)$ with its inverse. Thus $\eta\circ(e^k\zeta')\circ\varepsilon=1_M$, and so $\zeta'=\zeta^k_M$ as claimed.
It then follows that $c^k(M):=\Bild{\zeta^k_M}=\Bild{\zeta'}$ is
\[\Bild (\Hom_A(A\oplus P_{k-1}, M) \to \Hom_A(\Ker f^k, M))=\coKer (\Hom_A(\Bild f^k, M) \to \Hom_A(A\oplus P_{k-1}, M)),\]
as required.
\end{proof}

\begin{rem}
Using the descriptions
\begin{align*}
\ell^k(M)(X)=\Hom_{\homot[\bound]{A}}(X,M[k]),\\
r^k(M)(X)=\Hom_{\dercat[\bound]{A}}(X,M[k])
\end{align*}
of $\ell^k$ and $r^k$ when $k\geq2$, one can see that the canonical map $\zeta^k\colon\ell^k\to r^k$ agrees with that coming from the Verdier localisation functor $\homot[\bound]{A}\to\dercat[\bound]{A}$. Indeed, the isomorphism of $\Hom_{\homot[\bound]{A}}(X,M[k])$ with $\Hom_A(X_k,M)/\Bild{(f^k)^*}$ identifies the set of maps factoring through an acyclic complex, which is the kernel of the Verdier localisation functor, with $\Hom_A(\Bild{f^k},M)/\Bild{(f^k)^*}$.
\end{rem}

We now state the corresponding dual results for $\ell_k$, $r_k$ and $c_k$, using the notation $g_k\colon Q_{k-2}\to Q_{k-1}\oplus\kdual A$ for the rightmost non-zero map in $E_k$.

\begin{lem}
\label{lrdescription*}
For $M\in\lmod{A}$, we have
\begin{align*}
\ell_k(M)&=\kdual\Hom_A(M,\coKer{g_k})=\kdual\Hom_{\dercat[\bound]{A}}(M[-k],E_k),\\
r_k(M)&=\Hom_A(\Hom_{\homot[\bound]{A}}(\kdual A[-k],E_k),M).
\end{align*}
\end{lem}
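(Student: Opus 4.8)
The plan is to mirror the proof of Lemma~\ref{lrdescription}, working through the identification $\shiftalg{k}\cong\End_{\homot[\bound]{A}}(E_k)^{\op}$ of Proposition~\ref{HtoB}(b). Under it $\lmod{\shiftalg{k}}$ becomes $\lmod{(\add{E_k})}$, the functor $e_k$ is restriction of functors along $\add{\kdual A[-k]}\hookrightarrow\add{E_k}$, and $\kdual A[-k]$ has endomorphism algebra $A$ in $\homot[\bound]{A}$. Applying the general description of the left and right adjoints of such a restriction (\cite{CBSa}*{Lem.~2.6} together with the discussion preceding it) and then evaluating the resulting functors on $\kdual A[-k]$ to turn them into honest $A$-modules --- exactly as was done for $\ell^k$ and $r^k$ in the proof of Lemma~\ref{lrdescription} --- I would obtain
\[\ell_k(M)(Y)=\Hom_{\homot[\bound]{A}}(Y,\kdual A[-k])\otimes_A M,\qquad r_k(M)(Y)=\Hom_A(\Hom_{\homot[\bound]{A}}(\kdual A[-k],Y),M).\]
The second of these is already the asserted formula for $r_k$; in contrast to Lemma~\ref{lrdescription}, where $A[k]$ lies in the leftmost degree and is projective, here $\kdual A[-k]$ lies in the rightmost degree and is injective, so $\Hom_{\homot[\bound]{A}}(\kdual A[-k],Y)$ does not simplify further and is left as written.

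For $\ell_k$ the one computation needed is $\Hom_{\homot[\bound]{A}}(Y,\kdual A[-k])=\kdual(\coKer g)$. Since $\kdual A[-k]$ is the stalk complex with $\kdual A$ in degree $k$ and $Y=(Y_0\to\cdots\to Y_{k-1}\xrightarrow{g}Y_k)$ lives in degrees $0,\dots,k$, a chain map $Y\to\kdual A[-k]$ is precisely a morphism $Y_k\to\kdual A$ vanishing on $\Bild g$, i.e.\ an element of $\Hom_A(\coKer g,\kdual A)=\kdual(\coKer g)$, and there are no nonzero null-homotopies (a homotopy would land in $(\kdual A[-k])_{k-1}=0$ or originate from $Y_{k+1}=0$). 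Feeding this into the first displayed formula and applying the standard natural isomorphism $\kdual N\otimes_A M\cong\kdual\Hom_A(M,N)$ --- which follows from $\Hom_\K(\kdual N\otimes_A M,\K)\cong\Hom_A(M,N)$ and double duality --- with $N=\coKer g$ yields $\ell_k(M)(Y)=\kdual\Hom_A(M,\coKer g)$.

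For the derived-category reformulation I would observe that every object of $\add{E_k}$ has cohomology concentrated in degree $k$: this is clear for the stalk summand $\kdual A[-k]$, while the truncated minimal injective resolution $(E\to Q_0\to\cdots\to Q_{k-1})$ has the $k$-th cosyzygy of $E$ as its only nonvanishing cohomology. Hence $Y\cong(\coKer g)[-k]$ in $\dercat[\bound]{A}$, with $\coKer g=\mathrm{H}^k(Y)$, and so $\kdual\Hom_A(M,\coKer g)=\kdual\Hom_{\dercat[\bound]{A}}(M,\coKer g)=\kdual\Hom_{\dercat[\bound]{A}}(M[-k],Y)$, which is the remaining formula. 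One could alternatively deduce the whole lemma from Lemma~\ref{lrdescription} by applying the $\K$-duality to the recollement~\eqref{shifted-recollements} and invoking Remark~\ref{opposite}, under which $E_k$ over $A$ corresponds to $(\kdual E)^k$ over $A^{\op}$; I would nonetheless prefer the direct argument, since the opposite-algebra route entails an extra layer of bookkeeping.

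That bookkeeping of module structures is in fact the only delicate point: one must keep track of left versus right actions, and in particular of the identification $\End_{\homot[\bound]{A}}(\kdual A[-k])\cong A^{\op}$ (so that the tensor and Hom over this endomorphism ring become the $\otimes_A$ and $\Hom_A$ appearing in the statement), and remember that $\add{E_k}$ is formed in the homotopy category, so that each $Y$ in the lemma is genuinely --- up to homotopy --- a complex supported in degrees $0,\dots,k$ with last differential $g$. Apart from that, the argument is a routine dualisation of the proof of Lemma~\ref{lrdescription}.
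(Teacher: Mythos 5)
Your proof is correct, and it fills in exactly the argument that the paper omits: the paper simply states Lemma~\ref{lrdescription*} as ``the corresponding dual results'' after proving Lemma~\ref{lrdescription}, without working through the dualisation. Your computation faithfully mirrors that of Lemma~\ref{lrdescription}. The one genuinely new ingredient, relative to the $\ell^k$ case, is the conversion from the tensor-product expression to the $\Hom$ expression: in Lemma~\ref{lrdescription} the special summand $A[k]$ is a projective stalk in the leftmost degree, so $\Hom_{\homot[\bound]{A}}(A[k],X)=\Ker f$ simplifies on the $r^k$ side and the $\ell^k$ formula is left as a tensor; here $\kdual A[-k]$ is an injective stalk in the rightmost degree, so it is $\Hom_{\homot[\bound]{A}}(Y,\kdual A[-k])=\kdual\coKer g$ that simplifies, and one then needs the natural isomorphism $\kdual N\otimes_A M\cong\kdual\Hom_A(M,N)$ (valid since all modules over the finite-dimensional algebra $A$ are finite-dimensional over $\K$) to reach the stated form of $\ell_k$. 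You handle this correctly, and your observation that every object of $\add{E_k}$ has cohomology concentrated in degree $k$ (hence $Y\cong(\coKer g)[-k]$ in $\dercat[\bound]{A}$) is exactly what gives the derived-category reformulation. The left/right bookkeeping you flag at the end is indeed the only delicate point, and is resolved as in the proof of Proposition~\ref{ldescription2} by the identification $A\cong\End_{\homot[\bound]{A}}(\kdual A[-k])^{\op}$. Your remark that the lemma could alternatively be obtained from Lemma~\ref{lrdescription} via $\K$-duality and passage to $A^{\op}$ (Remark~\ref{opposite}) is also accurate; either route is acceptable, and your direct argument is arguably cleaner since it avoids the opposite-algebra translation.
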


\begin{lem}
\label{ldescription2*}
For $M\in\lmod{A}$ and $k\geq 2$, we have
\[r_k (M) = \Ker (\kdual\Hom_A(M,Q_{k-1}\oplus\kdual A) \xrightarrow{\kdual (g_k)_*} \kdual\Hom_A(M,Q_{k-2}) )
=\kdual\Hom_{\homot[\bound]{A}}(M[-k],E_k).\]
\end{lem}

\begin{thm}
\label{cdescription*}
For $M\in\lmod{A}$, the intermediate extension $c_k(M)\in\lmod{\shiftmod{k}}$ is given by
\[
c_k(M) = \Ker (\kdual\Hom_A(M,Q_{k-1}\oplus\kdual A) \to \kdual\Hom_A(M,\Bild{g_k})).
\]
\end{thm}

\section{Tilting subcategories for shifted modules}
\label{subcategories}

When two algebras are related via tilting, a result of Miyashita provides equivalences between various subcategories of their module categories. In this section, we will first recall this result, and then provide convenient descriptions of the relevant subcategories in the case of shifted and coshifted modules. In fact, our results will hold for arbitrary special tilting or cotilting modules, in the sense of Definition~\ref{dfn-tilting}.


To begin with, let $\Gamma$ be any finite-dimensional $k$-algebra, and let $T\in\lmod{\Gamma}$ be a tilting module of any finite projective dimension. We set $B:= \End_{\Gamma } (T)^{\op}$ and note that
$\kdual T$ is a $k$-cotilting left $B$-module.
We define, for $i\geq0$, subcategories 
\[
\mcT_i (T) := \bigcap_{j\neq i} \Ker \Ext^j_{\Gamma} (T, -)  \quad \text{ and }\quad 
\mcC_i (\kdual T ) := \bigcap_{j\neq i} \Ker \Ext^j_{B} (-, \kdual T), \]
which we refer to collectively as the \emph{tilting subcategories} associated to $T$. If $\pdim{T}=k$, both $\mcT_i(T)$ and $\mcC_i(\kdual T)$ are zero for $i>k$. These are the subcategories involved in Miyashita's equivalences, which are as follows.

\begin{thm}[{\cite[Thm.~1.16]{M}}]\label{tiltingEq}
For $0\leq i\leq k$, the functor $\Ext^i_\Gamma (T, -) \colon \mcT_i (T) \to \mcC_i(\kdual T)$ is an equivalence of categories with quasi-inverse $\kdual\Ext^i_{B} (-, \kdual T ) \colon \mcC_i(\kdual T) \to \mcT_i(T)$.  
\end{thm}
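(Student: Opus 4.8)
The plan is to deduce the statement from the derived equivalence induced by $T$, so that the Ext-functors appearing become degree-by-degree pieces of a single triangle equivalence. By the tilting theorem for bounded derived categories (\cite{CPS}*{Thm.~2.1}, cf.\ \cite{Hap}), the functor $F=\RHom_\Gamma(T,-)$ is a triangle equivalence $\dercat[\bound]{\Gamma}\isoto\dercat[\bound]{B}$; its quasi-inverse is $G=T\Ltens_B-$, which is bounded because $T$ has finite projective dimension as a right $B$-module by \cite{Hap}*{Lem.~III.2.4}. Resolving a left $B$-module $N$ by projectives and applying the natural isomorphism $\kdual\Hom_B(-,\kdual T)\cong T\otimes_B-$ (valid since all modules are finite-dimensional) shows that $G\cong\kdual\RHom_B(-,\kdual T)$. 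Hence, for $M\in\lmod{\Gamma}$ we have $H^j(F(M))=\Ext^j_\Gamma(T,M)$, while for $N\in\lmod{B}$ we have $H^{-j}(G(N))\cong\kdual\Ext^j_B(N,\kdual T)$, the homological conventions being arranged so that $\Tor$ on the one side matches $\kdual\Ext$ on the other.

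The first step is to recognise the tilting subcategories inside the derived categories. Since $H^j(F(M))=\Ext^j_\Gamma(T,M)$, a module $M$ lies in $\mcT_i(T)$ exactly when $F(M)$ has cohomology concentrated in degree $i$, i.e.\ $F(M)\cong\Ext^i_\Gamma(T,M)[-i]$; dually $N$ lies in $\mcC_i(\kdual T)$ exactly when $G(N)$ has cohomology concentrated in degree $-i$, i.e.\ $G(N)\cong\kdual\Ext^i_B(N,\kdual T)[i]$. For any fixed degree, the full subcategory of the bounded derived category on complexes with cohomology concentrated in that degree is equivalent to the module category via the functor ``take cohomology in that degree'', which is the inverse of the shifted inclusion of modules as stalk complexes. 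Using this, $F(-)[i]$ restricts on $\mcT_i(T)$ to a functor into $\lmod{B}$ agreeing with $\Ext^i_\Gamma(T,-)$, and $G(-)[-i]$ restricts on $\mcC_i(\kdual T)$ to a functor into $\lmod{\Gamma}$ agreeing with $\kdual\Ext^i_B(-,\kdual T)$ — as functors, not merely on objects, by the functoriality just noted.

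The second step is the round trip. Given $M\in\mcT_i(T)$, set $N=F(M)[i]\cong\Ext^i_\Gamma(T,M)$ in $\lmod{B}$. Since $G$ commutes with shifts and $G\circ F\cong\id$, we obtain $G(N)=G(F(M))[i]\cong M[i]$, which has cohomology only in degree $-i$; therefore $N\in\mcC_i(\kdual T)$, so $\Ext^i_\Gamma(T,-)$ does land in $\mcC_i(\kdual T)$, and moreover $\kdual\Ext^i_B(N,\kdual T)\cong G(N)[-i]\cong M$. Symmetrically, for $N\in\mcC_i(\kdual T)$ one sets $M=G(N)[-i]\cong\kdual\Ext^i_B(N,\kdual T)$ in $\lmod{\Gamma}$, uses $F\circ G\cong\id$ to get $F(M)\cong N[-i]$ concentrated in degree $i$, hence $M\in\mcT_i(T)$, and $\Ext^i_\Gamma(T,M)\cong F(M)[i]\cong N$. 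All of these isomorphisms are assembled from the structural natural isomorphisms $G\circ F\cong\id$ and $F\circ G\cong\id$ together with shift functors, hence are natural; this exhibits $\Ext^i_\Gamma(T,-)$ and $\kdual\Ext^i_B(-,\kdual T)$ as mutually quasi-inverse equivalences between $\mcT_i(T)$ and $\mcC_i(\kdual T)$.

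The main obstacle is bookkeeping rather than a single hard idea: pinning down the homological conventions so that $G\cong\kdual\RHom_B(-,\kdual T)$ with $H^{-j}(G(N))\cong\kdual\Ext^j_B(N,\kdual T)$ holds precisely, and verifying that $\Ext^i_\Gamma(T,-)$ really does coincide with $F(-)[i]$ as a functor on $\mcT_i(T)$, which relies on the functoriality of ``cohomology in a single degree''. If one wishes to avoid derived categories entirely, Miyashita's original route is available: show directly, by dimension shifting against the $\add T$-coresolution of $\Gamma$ from (T3), that $\Ext^i_\Gamma(T,M)\in\mcC_i(\kdual T)$ when $M\in\mcT_i(T)$; construct the unit and counit natural transformations from the long exact sequences attached to that coresolution; and prove they are isomorphisms by d\'evissage, reducing to the case $M\in\add T$, where $\Ext^i$ vanishes for $i>0$ and the statement follows from $\Phi(T)=B$ and $\Psi(T)=\kdual B$.
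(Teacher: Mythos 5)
The paper does not prove this theorem itself: it is imported directly from Miyashita \cite{M}*{Thm.~1.16}, so the natural object of comparison is Miyashita's original argument rather than anything in the paper. Your derived-category proof is correct. The key reductions all check out: $\mcT_i(T)$ is precisely the full subcategory of $\lmod{\Gamma}$ whose image under $F=\RHom_\Gamma(T,-)$ has cohomology concentrated in degree $i$; $\mcC_i(\kdual T)$ is likewise the preimage under $G=T\Ltens_B-\cong\kdual\RHom_B(-,\kdual T)$ of complexes concentrated in degree $-i$ (the identification $T\otimes_B N\cong\kdual\Hom_B(N,\kdual T)$ holding termwise on a projective resolution since everything is finite-dimensional, whence $H^{-j}(G(N))\cong\kdual\Ext^j_B(N,\kdual T)$); and then the equivalence and its quasi-inverse are exactly the restrictions of $F(-)[i]$ and $G(-)[-i]$ to the hearts of shifted standard t-structures, with naturality inherited from $GF\cong\id$ and $FG\cong\id$. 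This is a genuinely different route from Miyashita's 1986 proof, which predates derived Morita theory and works entirely at the level of module categories by dimension-shifting against the $\add T$-coresolution of $\Gamma$ from (T3), building unit and counit from the resulting long exact sequences and verifying they are isomorphisms by d\'evissage to $\add T$ (the fallback you sketch in your final paragraph). Your approach buys uniformity in $i$ and conceptual transparency---the family of Miyashita equivalences is visibly a single triangle equivalence sliced along its hearts---at the cost of invoking Cline--Parshall--Scott/Happel's theorem that a tilting module of finite projective dimension induces a bounded derived equivalence, plus the finiteness of $\pdim T_B$; Miyashita's argument buys elementarity and independence from triangulated machinery.
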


In the case $k=1$, in which $T$ is a classical tilting module, the pair $(\mcT_0(T),\mcT_1(T))$ is a torsion pair in $\lmod{\Gamma}$, and $(\mcC_1(\kdual T),\mcC_0(\kdual T))$ is a torsion pair in $\lmod{B}$. In this case Miyashita's result recovers Brenner--Butler's famous theorem \cite{BrenBut} (see also \cite[\S VI.3]{ASS}), stating that the torsion class in each of these pairs is equivalent to the torsion-free class in the other.

We will now, over the course of a lemma and three propositions, calculate the tilting subcategories for a special tilting or cotilting module. In each case the proof is provided for tilting modules, and can be dualised to provide an argument for cotilting modules.

\begin{lem}
\label{special-kerHom}
If $k\geq1$ and $T$ is a $P$-special $k$-tilting $\Gamma$-module for some projective $P$, then
\[\Ker\Hom_\Gamma(T,-)=\Ker\Hom_\Gamma(P,-).\]
Dually, if $C$ is an $I$-special $k$-cotilting $\Gamma$-module for some projective $I$ and $k\geq1$, then $\Ker\Hom_\Gamma(-,C)=\Ker\Hom_\Gamma(-,I)$.
\end{lem}
\begin{proof}
Since $k\geq1$, it follows directly from the definition that $P$ is a summand of $T$, so $\Ker\Hom_\Gamma(T,-)\subseteq\Ker\Hom_\Gamma(P,-)$, and that $T\in\gen(P)$, so $\Ker\Hom_\Gamma(P,-)\subseteq\Ker\Hom_\Gamma(T,-)$.
\end{proof}

\begin{pro}
\label{0-subcats}
If $k\geq 1$ and $T$ is a $P$-special $k$-tilting $\Gamma$-module for some projective $P$, then
\[\mcT_0(T)=\gen_{k-1}(P).\]
Dually, if $C$ is an $I$-special $k$-cotilting module for some injective $I$, then $\mcC_0(C)=\cogen^{k-1}(I)$.
\end{pro}
\begin{proof}
Assume $X\in \gen_{k-1}(P)$, so we have an exact sequence
\[\begin{tikzcd}[column sep=20pt]
0\arrow{r}& Y\arrow{r}& P^{k-1}\arrow{r}& \cdots \arrow{r}& P^0 \arrow{r}& X\arrow{r}& 0
\end{tikzcd}\]
with $P^i\in \add P$. Since $P\in\add{T}$ and $T$ is $k$-tilting, a standard homological argument with long exact sequences shows that for $j\geq1$ we have  
\[ 
\Ext^j_{\Gamma} (T, X) = \Ext^{k+j}_\Gamma(T, Y) = 0,
\]
so $X\in\mcT_0(T)$.

We prove the converse by induction on $k$. In the case $k=1$, note that $P$ is a direct summand of $T\in\gen{P}$, and hence $\gen(P)=\gen(T)$, and the latter coincides with $\mcT_0(T)$ (e.g.\ by \cite[Thm.~VI.2.5]{ASS}). Now let $T$ be $P$-special $k$-tilting for $k>1$, so that there is an exact sequence
\begin{equation}
\label{spec-tilt-induc}
\begin{tikzcd}[column sep=20pt]
0\arrow{r}&\Gamma\arrow{r}{\varphi}&P_{k-1}\arrow{r}&\cdots\arrow{r}&P_0\arrow{r}{\psi}&T^\circ\arrow{r}&0
\end{tikzcd}
\end{equation}
with $P_i\in\add{P}$ and $T^\circ\in\add{T}$. It follows directly from this sequence that $T'=P\oplus\Ker{\psi}$ is $P$-special $(k-1)$-tilting, and that $T''=P\oplus\coKer{\varphi}$ is $P$-special $1$-tilting. By induction, we may assume that $\gen_{k-2}(P)=\mcT_0(T')$. Now let $X\in \mcT_0(T)$. It follows from \eqref{spec-tilt-induc} that $\mcT_0 (\shiftmod{k}) \subseteq \mcT_0(\shiftmod{k-1}) =\gen_{k-2}(\Pi )$, and so we have an exact sequence
\begin{equation}
\label{gen-comp}
\begin{tikzcd}[column sep=20pt]
0\arrow{r}&Z\arrow{r}&P^{k-2}\arrow{r}&\cdots\arrow{r}&P^0\arrow{r}&X\arrow{r}&0,
\end{tikzcd}
\end{equation}
with $P^i\in\add{P}$. Thus we only need to see that $Z\in \gen (P)$, or equivalently, by the base case of the induction, that $Z\in\mcT_0(T'')$. 
We claim that
\[ 
\Ext^1_\Gamma(T'',Z) \cong  \Ext^k_\Gamma (T, Z) \cong \Ext^1_\Gamma(T, X) =0.
\]
The first isomorphism follows from \eqref{spec-tilt-induc}, the second follows from \eqref{gen-comp}, and $\Ext^1_\Gamma(T,X)=0$ by assumption since $X\in\mcT_0(T)$. Thus $Z\in\ker\Ext^1_\Gamma(T'',-)=\mcT_0(T'')=\gen(P)$, as required.
\end{proof}

\begin{pro}
\label{int-subcats}
If $k\geq1$ and $T$ is a $P$-special $k$-tilting $\Gamma$-module for some projective $P$, then
\[\mcT_j(T)=\{0\}\]
for any $0<j<k$. Dually, if $C$ is an $I$-special $k$-cotilting module for some injective $I$, then $\mcC_j(C)=\{0\}$ for any $0<j<k$.
\end{pro}

\begin{proof}
Let $T''$ be the $P$-special $1$-tilting module defined as in the proof of Proposition~\ref{0-subcats}. As in this previous proof, it follows from the exact sequence \eqref{spec-tilt-induc} that
\[\Ker\Ext^k_\Gamma(T,-)=\Ker\Ext^1_\Gamma(T'',-)=\mcT_0(T''),\]
and so for $j\ne k$ we have $\mcT_j(T)\subseteq\mcT_0(T'')$.

On the other hand, if $j\ne0$ then we can use Lemma~\ref{special-kerHom} to see that
\[\mcT_j(T)\subseteq\Ker\Hom_\Gamma(T,-)=\Ker\Hom_\Gamma(P,-)=\Ker\Hom_\Gamma(T'',-)=\mcT_1(T'').\]
Thus $\mcT_j(T)\subseteq\mcT_0(T'')\cap\mcT_1(T'')$, but this intersection is $\{0\}$ since $T''$ is $1$-tilting, meaning that its two tilting subcategories form a torsion pair.
\end{proof}

\begin{pro}
\label{k-subcats}
If $k\geq1$ and $T$ is a $P$-special $k$-tilting $\Gamma$-module for some projective $P$, then
\[\mcT_k(T)=\Ker\Hom_\Gamma(P,-).\]
Dually, if $C$ is an $I$-special $k$-cotilting module for some injective $I$, then $\mcC_k(C)=\Ker\Hom_\Gamma(-,I)$.
\end{pro}
\begin{proof}
Since $k\geq1$, the inclusion
\[\mcT_k(T)\subseteq\Ker\Hom_\Gamma(T,-)=\Ker\Hom_\Gamma(P,-)\]
follows from Lemma~\ref{special-kerHom}, and so it remains to show the reverse inclusion.

Assume that $\Hom_\Gamma(P,X)=0$, and consider again the exact sequence \eqref{spec-tilt-induc}. It follows from this sequence that $P\oplus T^\circ$ is a $k$-tilting module, meaning that its additive closure, which is a priori contained in $\add(T)$, is even equal to $\add(T)$. Thus, to show that $\Ext^i_\Gamma(T,X)=0$ for $0<i<k$, it suffices to show that $\Ext^i_\Gamma(T^\circ,X)=0$ for such $i$. But writing $M_i$ for the kernel of the map in \eqref{spec-tilt-induc} starting at $P_i$ (and $M_{-1}=T^\circ)$, a standard homological argument shows that, for $0<i\leq k$, we have
\[\Ext^i_\Gamma(T^\circ,X)=\Ext^1_\Gamma(M_{i-2},X)=\coKer(\Hom_\Gamma(P_{i-2},X)\to\Hom_\Gamma(M_{i-1},X))=\Hom_\Gamma(M_{i-1},X)\]
since $\Hom_\Gamma(P,X)=0$ by assumption. Then, providing $i<k$, we see from \eqref{spec-tilt-induc} that $M_{i-1}\in\gen(P)$, hence $\Ker\Hom_\Gamma(M_{i-1},-)\subseteq\Ker\Hom_\Gamma(P,-)$ contains $X$. Thus $\Ext^i_\Gamma(T^\circ,X)=\Hom_\Gamma(M_{i-1},X)=0$, as required.
\end{proof}

We now apply the preceding results to the shifted and coshifted modules, to obtain the main results of this section.

\begin{thm}
\label{tilting-subcats}
Let $\Gamma$ be a finite-dimensional algebra with $\domdim{\Gamma}=d+1>0$ and maximal projective-injective module $\Pi$, and let $1\leq k\leq d+1$. Write $I_k= \kdual \Hom_{\Gamma} (\Pi,\shiftmod{k})$. Then $I_k$ is an injective summand of the $\shiftalg{k}$-module $\kdual\shiftmod{k}$, and the tilting subcategories associated to the shifted module $\shiftmod{k}$ are
\[\mcT_j(\shiftmod{k})=\begin{cases}\gen_{k-1}(\Pi),&j=0,\\\Ker\Hom_\Gamma(\Pi,-),&j=k,\\\{0\},&\text{otherwise},\end{cases}\qquad
\mcC_j(\kdual\shiftmod{k})=\begin{cases}\cogen^{k-1}(I_k),&j=0,\\\Ker\Hom_{\shiftalg{k}}(-,I_k),&j=k,\\\{0\},&\text{otherwise.}\end{cases}\]
\end{thm}

\begin{proof}
That $I_k$ is an injective summand of $\kdual\shiftmod{k}$ follows from the fact that $\Pi$ is a summand of both $\Gamma$ and $\shiftmod{k}$. The rest of the statement is a direct application of Propositions~\ref{0-subcats}, \ref{int-subcats} and \ref{k-subcats}, using that $\shiftmod{k}$ is $\Pi$-special $k$-tilting, and that $\kdual\shiftmod{k}$ is $I_k$-special $k$-cotilting (see Lemma~\ref{proj-injAndNakayama}(1)).\qedhere

\end{proof}

By combining Theorem~\ref{tilting-subcats} with Miyashita's equivalences from Theorem~\ref{tiltingEq}, we obtain the following.

\begin{cor}
In the setting of Theorem~\ref{tilting-subcats}, there are equivalences of categories
\begin{align*}
\Hom_\Gamma(\shiftmod{k},-)&\colon\gen_{k-1}(\Pi)\isoto\cogen^{k-1}(I_k)=\mcC_0(\kdual\shiftmod{k}),\\
\Ext^k_\Gamma(\shiftmod{k},-)&\colon\ker\Hom_\Gamma(\Pi,-)\isoto\Ker\Hom_{\shiftalg{k}}(-,I_k)=\mcC_k(\kdual\shiftmod{k}).
\end{align*}
In particular, the categories $\mcC_k(\kdual\shiftmod{k})$ for $1\leq k\leq d+1$ are all equivalent to each other, and if $k\geq2$ there is a fully faithful functor $\mcC_0(\kdual\shiftmod{k}) \to \mcC_0(\kdual\shiftmod{k-1})$ sending $\kdual\shiftmod{k}$ to $\kdual\shiftmod{k-1}$.   
\end{cor}
\begin{proof}
The equivalences are immediate from Theorems~\ref{tiltingEq} and \ref{tilting-subcats}. Using the second equivalence, we see that $\mcC_k(\kdual\shiftmod{k})\simeq\ker\Hom_\Gamma(\Pi,-)$ for any $1\leq k\leq d+1$. For the final statement, the fully faithful functor is provided by the composition
\[\mcC_0(\kdual\shiftmod{k})\isoto\gen_{k-1}(\Pi)\hookrightarrow\gen_{k-2}(\Pi)\isoto\mcC_0(\kdual\shiftmod{k-1}),\]
where the equivalences are those in Theorem~\ref{tiltingEq}, and the middle map is the natural inclusion. The first equivalence takes $\kdual\shiftmod{k}$ to $\kdual\Gamma$, which is then taken to $\kdual\shiftmod{k-1}$ by the second equivalence.
\end{proof}

Using the versions of Propositions~\ref{0-subcats}--\ref{k-subcats} for cotilting modules, we obtain the following dual statements.

\begin{thm}
Let $\Gamma$ be a finite-dimensional algebra with $\domdim{\Gamma}=d+1>0$ and maximal projective-injective module $\Pi$, and let $1\leq k\leq d+1$. Write $P^k= \Hom_{\Gamma} (\coshiftmod{k},\Pi)$. Then $P^k$ is a projective summand of the $\coshiftalg{k}$-module $\kdual\coshiftmod{k}$, and the tilting subcategories associated to the coshifted module $\coshiftmod{k}$ are
\[\mcC_j(\coshiftmod{k})=\begin{cases}\cogen^{k-1}(\Pi),&j=0,\\\Ker\Hom_\Gamma(-,\Pi),&j=k,\\\{0\},&\text{otherwise.}\end{cases}
\qquad
\mcT_j(\kdual\coshiftmod{k})=\begin{cases}\gen_{k-1}(P^k),&j=0,\\\Ker\Hom_{\coshiftalg{k}}(P^k,-),&j=k,\\\{0\},&\text{otherwise},\end{cases}
\]
\end{thm}

\begin{cor}
In the setting of Theorem~\ref{tilting-subcats}, there are equivalences of categories
\begin{align*}
\kdual\Hom_\Gamma(-,\coshiftmod{k})&\colon\cogen^{k-1}(\Pi)\isoto\gen_{k-1}(P^k)=\mcT_0(\kdual\coshiftmod{k}),\\
\kdual\Ext^k_\Gamma(-,\coshiftmod{k})&\colon\ker\Hom_\Gamma(-,\Pi)\to\Ker\Hom_{\coshiftalg{k}}(P^k,-)=\mcT_k(\kdual\coshiftmod{k}).
\end{align*}
In particular, the categories $\mcT_k(\kdual\coshiftmod{k})$ for $1\leq k\leq d+1$ are all equivalent to each other, and if $k\geq2$ there is a fully faithful functor $\mcT_0(\kdual\coshiftmod{k}) \to \mcT_0(\kdual\coshiftmod{k-1})$ sending $\kdual\coshiftmod{k}$ to $\kdual\coshiftmod{k-1}$.   
\end{cor}

\section{Tilting modules as intermediate extensions}
\label{int-exts}

As usual, let $\Gamma$ be a finite-dimensional algebra with $\domdim{\Gamma}=d+1>0$. In this section, we assume $d\geq1$, so that $\Gamma$ forms part of a Morita--Tachikawa triple $(A,E,\Gamma)$, and consider the intermediate extension functors in our preferred recollements involving the shifted and coshifted algebras $\shiftalg{k}$ and $\coshiftalg{k}$ of $\Gamma$, which we denote by $c_k$ and $c^k$ respectively. Our main result is that, for any $1\leq k\leq d$, the distinguished cotilting module $\kdual\shiftmod{k}$ for the $k$-th shifted algebra $\shiftalg{k}$ of $\Gamma$ is the intermediate extension $c_kE$. Similarly, $c^kE=\kdual\coshiftmod{k}$ is the distinguished tilting module for the coshifted algebra $\coshiftalg{k}$. 
We first give some general results, for arbitrary tilting or cotilting modules.

\begin{pro}
\label{rest-canonical-to-E}
Let $\Gamma$ be a finite-dimensional algebra with tilting module $T$, cotilting module $C$ and maximal projective summand $\Pi$, and write $B=\End_\Gamma(T)^{\op}$ and $B'=\End_\Gamma(C)^{\op}$. Let $e$ and $e'$ be the idempotents of $B$ and $B'$ given in each case by projection onto $\Pi$. Then there are natural isomorphisms
\[e\kdual T=\kdual\Pi=e'\kdual C\]
of $\End_\Gamma(\Pi)^{\op}$-modules. In particular, if $\Gamma$ is part of a Morita--Tachikawa triple $(A,E,\Gamma)$, then there are natural isomorphisms
\[e\kdual T=E=e'\kdual C\]
of $A$-modules.
\end{pro}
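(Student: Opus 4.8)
The plan is to compute $e\kdual T$ directly from the bimodule structure carried by $T$, and to observe that the answer turns out to be independent of $T$ altogether; the identical computation applied to $C$ then gives the claimed common value.

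First I would recall, as noted in Section~\ref{homotopy-cats}, that the maximal projective-injective summand $\Pi$ of $\Gamma$ is a direct summand of every tilting and of every cotilting $\Gamma$-module, so that the idempotents $e\in B$ and $e'\in B'$ are genuinely defined: $e$ corresponds to an idempotent endomorphism $\pi\in\End_\Gamma(T)$ whose image is the summand $\Pi\subseteq T$, and $e'\in B'$ likewise corresponds to an idempotent endomorphism of $C$ with image $\Pi\subseteq C$. Fixing conventions compatible with the right-to-left composition used throughout the paper, $T$ is a left $\Gamma$-, right $B$-bimodule, so $\kdual T=\Hom_\K(T,\K)$ is a left $B$-module and the right action of $e$ on $T$ is exactly $\pi$; hence $Te=\pi(T)$ is the summand $\Pi$, carrying its induced left $\Gamma$-, right $eBe$-bimodule structure, and there is the usual ring identification $eBe\cong\End_\Gamma(\Pi)^{\op}$. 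The analogous bookkeeping applies to $C$, $e'$ and $e'B'e'$.

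The key step is the elementary observation that for any right $B$-module $N$ and any idempotent $e\in B$ there is a natural isomorphism of left $eBe$-modules $e\kdual N\cong\kdual(Ne)$: given $f\in\kdual N$, the functional $e\cdot f$ defined by $(e\cdot f)(n)=f(ne)$ vanishes on $N(1-e)$ and agrees with $f$ on the summand $Ne$, so the assignments $f\mapsto(e\cdot f)|_{Ne}$ and extension-by-zero are mutually inverse and $eBe$-linear. I would then apply this with $N=T$ to get $e\kdual T\cong\kdual(Te)=\kdual\Pi$ as left $eBe\cong\End_\Gamma(\Pi)^{\op}$-modules, and likewise with $N=C$ (and $B'$, $e'$) to get $e'\kdual C\cong\kdual\Pi$. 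Since both sides are thereby identified with the single $\End_\Gamma(\Pi)^{\op}$-module $\kdual\Pi$, this yields $e\kdual T=\kdual\Pi=e'\kdual C$. For the final sentence I would simply recall that in a Morita--Tachikawa triple $(A,E,\Gamma)$ one has $A=\End_\Gamma(\Pi)^{\op}$ and $E=\kdual\Pi$ as an $A$-module, so the identity just proved reads $e\kdual T=E=e'\kdual C$.

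I do not expect a substantive obstacle here; the one thing demanding care is the compatibility of all the module structures, i.e.\ checking that the chain of identifications $A\cong eBe\cong e'B'e'\cong\End_\Gamma(\Pi)^{\op}$ makes the isomorphisms above isomorphisms of $A$-modules and not merely of $\K$-vector spaces. Once the bimodule conventions (in particular, that $T$ is the right $B$-module obtained by evaluating endomorphisms, with morphisms composed right-to-left) are written down explicitly, this is routine manipulation of opposite algebras, so the main — and essentially only — work is setting up notation cleanly.
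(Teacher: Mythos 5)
Your proof is correct, and it takes a genuinely different and more elementary route than the paper's. The paper computes $e\kdual T = \Hom_B(\Phi(\Pi),\Phi(\kdual\Gamma))$ where $\Phi=\Hom_\Gamma(T,-)$, and then invokes Miyashita's theorem (full faithfulness of $\Phi$ on $\mcT_0(T)$, which contains the injectives) to conclude this equals $\Hom_\Gamma(\Pi,\kdual\Gamma)=\kdual\Pi$; a dual argument with $\Phi'=\kdual\Hom_\Gamma(-,C)$ handles $e'\kdual C$. You instead apply the purely module-theoretic identity $e\kdual N\cong\kdual(Ne)$ for a right $B$-module $N$ and idempotent $e$, which is just the fact that $\K$-duality commutes with idempotent truncation. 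Both proofs are valid, but yours avoids tilting theory entirely and in fact establishes a slightly stronger fact: the equality $e\kdual T=\kdual\Pi$ holds for \emph{any} $\Gamma$-module $T$ with $\Pi$ as a direct summand, not just a tilting module (the tilting hypothesis only enters in guaranteeing that $\Pi\in\add{T}$ so that $e$ exists). The paper's route has the advantage of staying within the language of the tilting equivalences already developed in Section~5, which makes the identification compatible by construction with the $\Phi$-identifications used elsewhere in that section; but as an isolated verification of the stated isomorphism of $A$-modules, your computation is cleaner and self-contained. One small point worth making explicit if you were to write this up: the isomorphism $e\kdual N\cong\kdual(Ne)$ and the identification $eBe\cong\End_\Gamma(\Pi)^{\op}\cong A$ must be checked to induce the \emph{same} $A$-module structure on $\kdual\Pi$ as the one used in the Morita--Tachikawa triple (namely, the one coming from $\Pi$ being a right $\End_\Gamma(\Pi)^{\op}$-module); you flag this as the only bookkeeping concern, and indeed it does work out.
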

\begin{proof}
Writing $\Phi=\Hom_\Gamma(T,-)$, we have $Be=\Phi(\Pi)$ and $\kdual T=\Phi(\kdual\Gamma)$. It follows that
\[
e(\kdual T)  = \Hom_{B}(\Phi (\Pi), \Phi (\kdual \Gamma )) =\Hom_{\Gamma } (\Pi, \kdual \Gamma )= \kdual \Pi,\]
since by \cite[Thm.~1.16]{M} (here Theorem~\ref{tiltingEq}) $\Phi$ is fully faithful on the subcategory $\mcT_0(T)$, which contains all injective $\Gamma$-modules. Writing $\Phi'=\kdual\Hom_\Gamma(-,C)$, we have $\kdual(e'B')=\Phi'(\Pi)$ and $\kdual C=\Phi'(\Gamma)$. It follows that
\[e'(\kdual C)=\kdual\Hom_{B'}(\Phi'(\Gamma),\Phi'(\Pi))=\kdual\Hom_{\Gamma}(\Gamma,\Pi)=\kdual\Pi,\]
since by \cite[Thm.~1.16]{M} again, $\Phi'$ is fully faithful on the subcategory $\mcC_0(C)$, which contains all projective $\Gamma$-modules. The final statement follows since the module $E$ in a Morita--Tachikawa triple is always given by $\kdual\Pi$, for $\Pi$ as in the statement.
\end{proof}


Maintaining the notation of Proposition~\ref{rest-canonical-to-E}, consider the $B$-modules
\[
P= \Hom_{\Gamma}(T, \Pi ),\qquad I= \kdual \Hom_{\Gamma} (\Pi, T),
\]
noting that $P$ is projective, $I$ is injective and $\nu P =I$. Furthermore, since $\Pi$ is a summand of both $\Gamma$ and $\kdual\Gamma$, we have $P\oplus I \in \add\kdual T$. In terms of the idempotent $e$, we have $P=Be$ and $I=\kdual(eB)$. Our aim is now to characterise when the cotilting $B$-module $\kdual T$ is in the image of the intermediate extension functor $c$ associated to this idempotent.

\begin{pro} \label{CasIntExt} In the context of the preceding paragraph, let $m,n\geq0$, and denote by $\Omega$ and $\Omega^{-}$ the syzygy and cosyzygy functors for $\Gamma$.
\begin{itemize}
\item[(i)] The following are equivalent:
\begin{itemize} 
\item[(a)]
$\Gamma \in \cogen^{m-1}(\Pi )$ and $\Ext^1_{\Gamma} (\Omega^{-i} \Gamma, T)=0$ for $1\leq i \leq m$, and
\item[(b)] $\kdual T\in \cogen^{m-1}(I)$.
\end{itemize}
\item[(ii)] The following are equivalent:
\begin{itemize}
\item[(a)] $\kdual \Gamma\in \gen_{n-1} (\Pi )$ and $\Ext^1_{\Gamma} (T, \Omega^i \kdual \Gamma )=0$ for $1\leq i \leq n$, and
\item[(b)] $\kdual T\in \gen_{n-1} (P)$.
\end{itemize}
\end{itemize}
Moreover the conditions in (i) and (ii) both hold for some $m,n\geq1$ if and only if $\kdual T$ is in the image of the intermediate extension functor $c$ associated to $e$, and in this case $\kdual T=c(\kdual\Pi)$.
\end{pro}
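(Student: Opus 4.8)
The plan is to derive the claimed equivalence from Lemma~\ref{ImageAndKernel}, which gives $\Bild c=\gen(P)\cap\cogen(I)$, together with the equivalences (a)$\Leftrightarrow$(b) already proved in parts (i) and (ii), and then to identify the specific object $\kdual\Pi$ using Proposition~\ref{rest-canonical-to-E}. The argument is short; the only point requiring care is the matching of the quantifier ``for some $m,n\geq1$'' with the degenerate values $m=n=1$.

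First I would record the elementary inclusions $\cogen^{m-1}(I)\subseteq\cogen(I)$ for every $m\geq1$ and $\gen_{n-1}(P)\subseteq\gen(P)$ for every $n\geq1$, together with the equalities $\cogen^0(I)=\cogen(I)$ and $\gen_0(P)=\gen(P)$ coming straight from Definition~\ref{gen-cogen}. For the forward implication, suppose the conditions in (i) and (ii) hold for some $m,n\geq1$. Applying the equivalences (i)(a)$\Leftrightarrow$(b) and (ii)(a)$\Leftrightarrow$(b) gives $\kdual T\in\cogen^{m-1}(I)$ and $\kdual T\in\gen_{n-1}(P)$, hence $\kdual T\in\cogen(I)\cap\gen(P)=\Bild c$ by Lemma~\ref{ImageAndKernel}. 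Conversely, if $\kdual T\in\Bild c=\gen(P)\cap\cogen(I)$, then $\kdual T\in\gen_0(P)$ and $\kdual T\in\cogen^0(I)$, i.e.\ condition (i)(b) holds for $m=1$ and (ii)(b) holds for $n=1$; the equivalences in parts (i) and (ii) then show that the full conditions in (i) and (ii) hold with $m=n=1$, so in particular for some $m,n\geq1$.

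For the final identification, I would use that $c$ is fully faithful with $ec\cong1$, so that the restriction of $e$ to $\Bild c$ is a quasi-inverse of $c\colon\lmod{A}\to\Bild c$, as recorded after the recollement~\eqref{recollement}. Assuming $\kdual T\in\Bild c$ and using $e\kdual T=\kdual\Pi$ from Proposition~\ref{rest-canonical-to-E}, this quasi-inverse relation yields $\kdual T\cong c(e\kdual T)=c(\kdual\Pi)$, which is the asserted formula.

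I do not anticipate a genuine obstacle here: the substantive work has already been done in Lemma~\ref{ImageAndKernel}, Proposition~\ref{rest-canonical-to-E} and parts (i)--(ii) of the present proposition, and what remains is purely a matter of assembling these statements and observing that the ``cogen/gen with superscript/subscript $0$'' cases recover the ordinary $\cogen$ and $\gen$ appearing in the description of $\Bild c$.
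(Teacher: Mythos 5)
Your argument correctly handles the ``Moreover'' part of the proposition, and your treatment of it matches the paper's final paragraph almost exactly: both directions go through Lemma~\ref{ImageAndKernel} to identify $\Bild c=\gen(P)\cap\cogen(I)$, both use that the conditions in (i) and (ii) are monotone in $m$ and $n$ so that it suffices to examine $m=n=1$, and both use Proposition~\ref{rest-canonical-to-E} together with $\kdual T=ce\kdual T$ to identify $\kdual T$ as $c(\kdual\Pi)$.

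However, you have not proved the proposition. You explicitly treat ``the equivalences (a)$\Leftrightarrow$(b) already proved in parts (i) and (ii)'' as established facts, but those equivalences \emph{are} the substantive content of Proposition~\ref{CasIntExt}, and nothing earlier in the paper proves them. The actual work involves homological bookkeeping: for (i)(a)$\Rightarrow$(b) one takes an exact sequence $0\to\Gamma\to\Pi_0\to\cdots\to\Pi_{m-1}\to X\to 0$ witnessing $\Gamma\in\cogen^{m-1}(\Pi)$ and shows that applying $\Psi=\kdual\Hom_\Gamma(-,T)$ preserves exactness at the relevant spots because $\Ext^1_\Gamma(\Omega^{-i}\Gamma,T)=0$; since $\Psi(\Gamma)=\kdual T$ and $\Psi(\Pi)=I$ this gives $\kdual T\in\cogen^{m-1}(I)$. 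For (i)(b)$\Rightarrow$(a) one starts from an $\add I$-coresolution of $\kdual T$, applies the adjoint $\Psi'=\Hom_B(\kdual T,-)$, uses $\Ext^i_B(\kdual T,\kdual T)=0$ to show that $\Psi'$ preserves exactness, recovers a coresolution of $\Gamma$ by $\add\Pi$, and then uses that this new sequence is a projective resolution of $\Psi'Y$ to extract the required vanishing of $\Ext^1_\Gamma(\Omega^{-i}\Gamma,T)$. Part (ii) is analogous with $\Phi$ and $\Phi'$ in place of $\Psi$ and $\Psi'$. Until this is supplied, what you have is only the easy corollary, not the proposition itself.
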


\begin{proof}
Since conditions (a) and (b) are vacuous for $m=0$ and $n=0$  respectively, we may assume $m,n\geq1$. We will also use the following straightforward observations, which hold in an arbitrary abelian category. Given an exact sequence
\[X_{\bullet}=(\mathord{\begin{tikzcd}[column sep=20pt]
\cdots\arrow{r}& X_{i-1}\arrow{r}& X_i\arrow{r}& X_{i+1}\arrow{r}&\cdots
\end{tikzcd}}),\]
let $Z_i=\Ker (X_i\to X_{i+1})$ for each $i\in\Z$.
Then for any object $Y$,
\begin{itemize}
\item[(1)] if $\Ext^1(Y, Z_{i-1})=0$, then $\Hom (Y, X_{\bullet})$ is exact at $\Hom(Y,X_i)$, and
\item[(2)] if $\Ext^1(Z_{i+2}, Y) =0$, then $\Hom (X_{\bullet}, Y)$ is exact at $\Hom(X_i,Y)$.
\end{itemize}
The proof now proceeds as follows.
\begin{itemize}
\item[(i)]Assume $\Gamma\in\cogen^{m-1}(\Pi)$ and $\Ext^1_{\Gamma} (\Omega^{-i} \Gamma, T)=0$ for $1\leq i \leq m$. Consider an exact sequence
\[\begin{tikzcd}[column sep=20pt]
0\arrow{r}&\Gamma\arrow{r}&\Pi_0\arrow{r}&\cdots\arrow{r}&\Pi_{m-1}\arrow{r}& X\arrow{r}&0
\end{tikzcd}\]
with $\Pi_i\in\add{\Pi}$. Thinking of this as an infinite complex with $\Pi_i$ in degree $i$ and defining $Z_i$ as above, we can apply the functor $\Psi=\kdual\Hom_{\Gamma}(-,T)$ and use observation (2) to see that the resulting sequence
\[\begin{tikzcd}
0\arrow{r}&\Psi\Gamma\arrow{r}&\Psi\Pi_0\arrow{r}&\cdots\arrow{r}&\Psi\Pi_{k-1}
\end{tikzcd}\]
is exact, since $\Ext^1_\Gamma(Z_i,T)=\Ext^1_\Gamma(\Omega^{-i}\Gamma,T)=0$ for $1\leq i\leq m$. Since $\Psi(\Gamma)=\kdual T$ and $\Psi(\Pi)=I$, it follows that $\kdual T\in\cogen^{m-1}(I)$.


Conversely, assume $\kdual T\in\cogen^{m-1}(I)$, and take an exact sequence 
\[\begin{tikzcd}[column sep=20pt]
0\arrow{r}& \kdual T\arrow{r}& I_0\arrow{r}& I_1\arrow{r}& \cdots \arrow{r}& I_{m-1}\arrow{r}& Y \arrow{r}& 0
\end{tikzcd}\]
with each $I_i\in \add I$, viewed as an infinite complex with $I_i$ in degree $i$, and define $Z_i$ as above. Then a standard homological argument using the above sequence shows that
\[\Ext^1_B(\kdual T, Z_i)=\Ext^{i+1}_B(\kdual T,\kdual T)=0\]
for $0\leq i\leq m-1$. So by observation (1) we can apply the right adjoint $\Psi'=\Hom_B(\kdual T,-)$ of $\Psi$, which satisfies $\Psi'(\kdual T)=\Gamma$ and $\Psi'(I)=\Pi$, to get an exact sequence
\[\begin{tikzcd}[column sep=20pt]
0\arrow{r}& \Gamma \arrow{r}& \Pi_0\arrow{r}& \cdots \arrow{r}& \Pi_{m-1}\arrow{r}&\Psi'Y\arrow{r}&0.
\end{tikzcd}\]
It follows that $\Gamma\in\cogen^{m-1}(\Pi)$. This is also a projective resolution of $\Psi'Y$, so we can use it to compute $\kdual\Ext^i_\Gamma(\Psi'Y,T)$ by applying the right exact functor $\Psi$. However, applying this functor recovers the part
\[\begin{tikzcd}[column sep=20pt]
0\arrow{r}& \kdual T\arrow{r}& I_0\arrow{r}& I_1\arrow{r}& \cdots \arrow{r}& I_{m-1}
\end{tikzcd}\]
of the original exact sequence, since the natural map $\Psi\Psi'(\kdual T)\to \kdual T$ is an isomorphism and $I\in \add \kdual T$, so the cohomology of this complex vanishes in degrees $i\leq m-2$. On the other hand, the cohomology in degree $-1\leq i\leq m-2$ computes
\[\kdual\Ext^{m-1-i}_\Gamma(\Psi'Y,T)=\kdual\Ext^{m-1-i}_\Gamma(\Omega^{-m}\Gamma,T)=\kdual\Ext^1_\Gamma(\Omega^{-(i+2)}\Gamma,T),\]
and so $\Ext^1_\Gamma(\Omega^{-i} \Gamma , T)=0$ for $1\leq i\leq m$ as required.

\item[(ii)] This is analogous to (i), replacing $\Psi$ and $\Psi'$ by $\Phi =\Hom_{\Gamma}(T,-)$, with $\Phi (\kdual \Gamma ) =\kdual T$ and $\Phi (\Pi )=P$, and its left adjoint $\Phi^\prime = \kdual \Hom_B(-,\kdual T)$, with $\Phi^\prime (\kdual T)= \kdual \Gamma $ and $\Phi^\prime (P)=\Pi$.
\end{itemize}
If $\kdual T=c\kdual\Pi$, then $\kdual T\in\Bild{c}=\gen(P)\cap\cogen(I)$ by Lemma~\ref{ImageAndKernel}, so the conditions in (i) and (ii) hold for $m=n=1$. For the converse, note that these conditions become stronger as $m$ and $n$ increase, so it suffices to show that if they hold for $m=n=1$ then $\kdual T=c\kdual\Pi$. In this case we have $\kdual T\in\gen(P)\cap\cogen(I)=\Bild{c}$, so $\kdual T=ce\kdual T$. But by Proposition~\ref{rest-canonical-to-E}, we have $e\kdual T=\kdual\Pi$, and the result follows.
\end{proof}

As an application of this result, we obtain the promised result for the shifted modules and algebras of the algebra $\Gamma$ appearing in a Morita--Tachikawa triple.


\begin{thm}
\label{shifted-intexts}
Let $(A,E,\Gamma)$ be a Morita--Tachikawa triple, so $\domdim\Gamma=d+1$ for $d\geq1$, and write $\Pi$ for a maximal projective-injective summand of $\Gamma$. For each $0\leq k\leq d+1$, consider the shifted module $\shiftmod{k}$, its endomorphism algebra $\shiftalg{k}$, and let $c_k\colon\lmod{\Gamma}\to\lmod{\shiftalg{k}}$ be the intermediate extension functor from the recollement in \eqref{shifted-recollements}. Writing $P_k= \Hom_\Gamma(\shiftmod{k},\Pi)$ and $I_k= \kdual\Hom_\Gamma(\Pi,\shiftmod{k})$, we have
\[ 
\kdual\shiftmod{k} \in \gen_{d-k} (P_k ) \cap \cogen^{k-1} (I_k).
\]
If $1\leq k\leq d$, it then follows that
\[
\kdual\shiftmod{k}=c_kE.
\]
\end{thm}

\begin{proof}
Since $\domdim{\Gamma}=d+1$, we have
\begin{align*}
\Gamma\in\cogen^{d}(\Pi)&\subseteq\cogen^{k-1}(\Pi),\\
\kdual\Gamma\in\gen_{d}(\Pi)&\subseteq\gen_{d-k}(\Pi)
\end{align*}
for any $0\leq k\leq d+1$. To apply Proposition~\ref{CasIntExt}, it is therefore enough to check that 
\[
\Ext^1_{\Gamma}(\Omega^{-i} \Gamma ,\shiftmod{k}) = 0 = \Ext^1_{\Gamma }(\shiftmod{k}, \Omega^j \kdual \Gamma )
\]
for any $1\leq i\leq k$ and $1\leq j \leq d-k+1$, so fix $i$ and $j$ satisfying these constraints. Since $1\leq i,j\leq d+1$, the standard homological argument shows that
\[
\begin{aligned}
\Ext^n_{\Gamma}(\Omega^{-i} \Gamma, - ) &= \Ext^{n-i}_{\Gamma}(\Gamma, -)= 0,\\ 
\Ext^m_{\Gamma}(-, \Omega^j \kdual \Gamma ) &= \Ext^{m-j}_{\Gamma}(-, \kdual \Gamma )=0
\end{aligned}
\]
for all $n>i$ and $m>j$, using that the relevant syzygy and cosyzygy can be computed using projective-injective covers and envelopes. 
By the construction of $\shiftmod{k}$ from Proposition~\ref{shifting}, we then have
\begin{gather*}
\Ext^1_{\Gamma} (\Omega^{-i}\Gamma,\shiftmod{k}) = \Ext^{1+k}_{\Gamma} (\Omega^{-i}\Gamma , \Gamma )=0,\\
\Ext^1_{\Gamma}(\shiftmod{k}, \Omega^j\kdual \Gamma )  = \Ext^1_{\Gamma} (\Omega^{-k}\Gamma, \Omega^j \kdual \Gamma ) = \Ext^{2+d-k}_{\Gamma} (\Omega^{-(d+1)}\Gamma, \Omega^j \kdual \Gamma )=0
\end{gather*}
by the above calculations, noting that $1+k>i$ and $2+d-k>j$. Our desired conclusions now follow directly from Proposition~\ref{CasIntExt}.
\end{proof}

We close the section by stating the dual results for coshifted modules and algebras. Again in the setting of Proposition~\ref{rest-canonical-to-E}, write
\[ 
P'=\Hom_\Gamma(C,\Pi),\qquad
I'=\kdual\Hom_\Gamma(\Pi,C),\]
noting that $P'$ is projective, $I'=\nu P'$ is injective, and $P'\oplus I'\in\add{\kdual C}$. The dual of Proposition~\ref{CasIntExt}, obtained by swapping the roles of the two algebras, is as follows.

\begin{pro} \label{TasIntExt}
In the context of the preceding paragraph, let $m,n\geq0$.
\begin{itemize}
\item[(i)] The following are equivalent:
\begin{itemize} 
\item[(a)]
$\kdual\Gamma\in\gen_{m-1} (\Pi)$ and $\Ext^1_{\Gamma} (C,\Omega^i\kdual\Gamma)=0$ for $1\leq i \leq m$, and
\item[(b)] $\kdual C\in \gen_{m-1} (P^\prime )$. 
\end{itemize}
\item[(ii)] The following are equivalent:
\begin{itemize}
\item[(a)] $\Gamma\in \cogen^{n-1} (\Pi)$ and $\Ext^1_{\Gamma} (\Omega^{-i}\Gamma,C)=0$ for $1\leq i \leq n$, and
\item[(b)] $\kdual C\in \cogen^{n-1} (I^\prime)$.
\end{itemize}
\end{itemize}
Moreover the conditions in (i) and (ii) both hold for some $m,n\geq1$ if and only if $\kdual C$ is in the image of the intermediate extension functor $c'$ associated to $e'$, and in this case $\kdual C=c'(\kdual\Pi)$.
\end{pro}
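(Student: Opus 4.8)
The plan is to prove Proposition~\ref{TasIntExt} by running the proof of Proposition~\ref{CasIntExt} a second time, with the cotilting module $C$ in the role of the tilting module $T$ and with $B'=\End_\Gamma(C)^{\op}$ in the role of $B$; this is precisely the sense in which "the roles of the two algebras are swapped". The one preliminary point is that, by the Brenner--Butler tilting theorem \cite{BrenBut}, $\kdual C$ is a tilting $B'$-module with $\End_{B'}(\kdual C)^{\op}\cong\Gamma$, so that for $\kdual C$ the algebras $B'$ and $\Gamma$ occupy exactly the positions that $\Gamma$ and $B$ occupy for $T$ in Section~\ref{tilt-equivs}. First I would introduce $\Phi=\Hom_\Gamma(C,-)$ and $\Psi=\kdual\Hom_\Gamma(-,C)$, both functors $\lmod{\Gamma}\to\lmod{B'}$, with respective left and right adjoints $\Phi'=C\otimes_{B'}-$ and $\Psi'=\Hom_{B'}(\kdual C,-)$, and record the facts $\Psi(\Gamma)=\kdual C$, $\Psi(\Pi)=I'$, $\Psi'(\kdual C)=\Gamma$, $\Psi'(I')=\Pi$, that the counit $\Psi\Psi'(\kdual C)\to\kdual C$ is an isomorphism, $\Phi(\kdual\Gamma)=\kdual C$, $\Phi(\Pi)=P'$, $\Phi'(\kdual C)=\kdual\Gamma$, $\Phi'(P')=\Pi$, that $P'$ is projective and $I'=\nu P'$ injective over $B'$ with $P'\oplus I'\in\add{\kdual C}$, and that $\Ext^j_{B'}(\kdual C,\kdual C)=0$ for $j\geq1$. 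Each of these follows either by reading off Section~\ref{tilt-equivs} and Lemma~\ref{proj-injAndNakayama} under the above identification, or by a direct computation with the idempotent $e'$ (for instance $\Phi'(P')=C\otimes_{B'}B'e'=Ce'=\Pi$).

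With these facts in hand, the two equivalences in parts (i) and (ii) should follow by transporting the two halves of the proof of Proposition~\ref{CasIntExt}. Concretely, the argument of Proposition~\ref{CasIntExt}(i), which applies $\Psi$ to a coresolution of $\Gamma$ by modules in $\add\Pi$ (using observation~(2) and the hypothesis $\Ext^1_\Gamma(\Omega^{-i}\Gamma,C)=0$ to keep it exact) and conversely applies the right adjoint $\Psi'$ (using observation~(1)), should give the equivalence of (a) and (b) in part~(ii) of the present proposition. Dually, the argument of Proposition~\ref{CasIntExt}(ii), now using $\Phi$ and $\Phi'$ in place of $\Psi$ and $\Psi'$, should give the equivalence of (a) and (b) in part~(i). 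I would be careful that the indices $m$ and $n$ get exchanged relative to Proposition~\ref{CasIntExt}, since the $\gen$- and $\cogen$-conditions occur in the opposite parts of the two statements.

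For the final assertion I would first invoke Lemma~\ref{ImageAndKernel}, applied to $B'$ with the idempotent $e'$ (noting $B'e'=\Hom_\Gamma(C,\Pi)=P'$ and $\kdual(e'B')=\kdual\Hom_\Gamma(\Pi,C)=I'=\nu P'$), to get $\Bild{c'}=\gen(P')\cap\cogen(I')$. Since the conditions in (i) and (ii) only become more restrictive as $m$ and $n$ increase, it suffices to treat $m=n=1$. If $\kdual C=c'(\kdual\Pi)$, then $\kdual C\in\Bild{c'}=\gen(P')\cap\cogen(I')$, and the case $m=n=1$ of the equivalences already proved shows that (i)(a) and (ii)(a) hold. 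Conversely, if these hold for $m=n=1$, the same equivalences give $\kdual C\in\gen(P')\cap\cogen(I')=\Bild{c'}$, so $\kdual C=c'e'(\kdual C)$, and Proposition~\ref{rest-canonical-to-E} identifies $e'(\kdual C)=\kdual\Pi$, whence $\kdual C=c'(\kdual\Pi)$.

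The main obstacle is organisational rather than mathematical: one must check carefully that the functors $\Phi,\Psi,\Phi',\Psi'$ built directly from $C$ really do coincide, after interchanging the roles of $\Gamma$ and $B'$, with the Brenner--Butler quadruple of the tilting $B'$-module $\kdual C$, so that the proof of Proposition~\ref{CasIntExt} can be reused essentially line by line, and one must keep track of the $m\leftrightarrow n$ swap between the two propositions. No genuinely new homological argument should be required.
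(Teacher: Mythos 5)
Your proposal is correct and coincides with the paper's intended argument: the paper states Proposition~\ref{TasIntExt} without a separate proof, deferring to Proposition~\ref{CasIntExt} ``by swapping the roles of the two algebras,'' which is precisely the translation you spell out. You are also right to flag the two bookkeeping points that make this transfer legitimate---that the Brenner--Butler quadruple for the tilting $B'$-module $\kdual C$ is obtained from $\Phi,\Psi,\Phi',\Psi'$ built from $C$ by exchanging $\Gamma$ and $B'$, and that the indices $m$ and $n$ are interchanged relative to Proposition~\ref{CasIntExt}---and you handle the final assertion via Lemma~\ref{ImageAndKernel} and Proposition~\ref{rest-canonical-to-E} exactly as the paper does for the shifted case.
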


This result can then be used to prove the following dual to Theorem~\ref{shifted-intexts}.

\begin{thm}
\label{coshifted-intexts}
Let $(A,E,\Gamma)$ be a Morita--Tachikawa triple, so $\domdim\Gamma=d+1$ for $d\geq1$, and write $\Pi$ for a maximal projective-injective summand of $\Gamma$. For each $0\leq k\leq d+1$, consider the coshifted module $\coshiftmod{k}$, its endomorphism algebra $\coshiftalg{k}$, and let $c^k\colon\lmod{\Gamma}\to\lmod{\coshiftalg{k}}$ be the intermediate extension functor from the recollement in \eqref{shifted-recollements}. Writing $P^k= \Hom_\Gamma(\coshiftmod{k},\Pi)$ and $I^k= \kdual\Hom_\Gamma(\Pi,\coshiftmod{k})$, we have
\[ 
\kdual \coshiftmod{k} \in \gen_{d-k} (P^k ) \cap \cogen^{k-1} (I^k).
\]
If $1\leq k\leq d$, it then follows that
\[
\kdual\coshiftmod{k}=c^kE.
\]
\end{thm}

\section{Examples}
\label{examples}

\begin{exa}
Let $A$ be the path algebra of a linearly-oriented quiver of type $\mathsf{A}_3$, and take $E$ basic with $\add{E}=\add{(A\oplus\kdual A)}$. Then $\Gamma=\End_A(E)^{\op}$ is isomorphic to the quotient of the path algebra of the quiver
\[\begin{tikzcd}[column sep=15pt]
1\arrow{r}&2\arrow{r}&3\arrow{r}&4\arrow{r}&5
\end{tikzcd}\]
by the ideal generated by all paths of length $3$, and has global dimension $3$. We have
\[\Pi=\Gamma(e_1+e_2+e_3)=\begin{smallmatrix}1\\&2\\&&3\end{smallmatrix}\oplus\begin{smallmatrix}2\\&3\\&&4\end{smallmatrix}\oplus\begin{smallmatrix}3\\&4\\&&5\end{smallmatrix},\]
and
\[T_1=\Pi\oplus\begin{smallmatrix}3\end{smallmatrix}\oplus\begin{smallmatrix}3\\&4\end{smallmatrix}\]
so we may compute $B_1$ to be the path algebra of
\[\begin{tikzcd}[]
&2\arrow{dr}\\
1\arrow{ur}\arrow{dr}\arrow[dotted,-]{rr}&&5\arrow{r}&3\\
&4\arrow{ur}
\end{tikzcd}\]
modulo the commutativity relation on the square. We see that $\gldim{B_1}=2$ (cf.~Proposition~\ref{gldim-bound}). We can compute that, as $B_1$-modules, we have
\begin{align*}
\ell_1(E)&=\begin{smallmatrix}&1\\\end{smallmatrix}\oplus\begin{smallmatrix}&1\\4&&2\\&5\end{smallmatrix}\oplus\begin{smallmatrix}&1\\4&&2\\&5\\&3\end{smallmatrix}\oplus\begin{smallmatrix}&2\\5\\3\end{smallmatrix}\oplus\begin{smallmatrix}3\end{smallmatrix},\\
r_1(E)&=\begin{smallmatrix}1\end{smallmatrix}\oplus\begin{smallmatrix}1\\&2\end{smallmatrix}\oplus\begin{smallmatrix}&1\\4&&2\\&5\\&3\end{smallmatrix}\oplus\begin{smallmatrix}4&&2\\&5\\&3\end{smallmatrix}\oplus\begin{smallmatrix}3\end{smallmatrix},
\end{align*}
so the image of the universal map is
\[c_1(E)=\begin{smallmatrix}1\end{smallmatrix}\oplus\begin{smallmatrix}1\\&2\end{smallmatrix}\oplus\begin{smallmatrix}&1\\4&&2\\&5\\&3\end{smallmatrix}\oplus\begin{smallmatrix}&2\\5\\3\end{smallmatrix}\oplus\begin{smallmatrix}3\end{smallmatrix}=\kdual\shiftmod{1},\]
as claimed in Theorem~\ref{shifted-intexts}.
\end{exa}

\begin{exa}
A simple but instructive family of examples is the following. Let $A$ be the path algebra of a linearly oriented $\mathsf{A}_n$ quiver modulo the ideal  generated by all paths of length $2$, and take $E$ basic with $\add{E}=\add(A\oplus\kdual A)$. Then $E$ is $(n-1)$-cluster-tilting, so $\Gamma=\End_A(E)^{\op}$ is an $(n-1)$-Auslander algebra, with dominant and global dimension $n$, and its families of shifted and coshifted algebras coincide, with $\shiftalg{k}\cong\coshiftalg{n-k}$, by Theorem~\ref{tilt-cotilt-dAG}.

We compute that the $k$-th coshifted algebra $\coshiftalg{k}$ may be presented as the path algebra of the linearly oriented $\mathsf{A}_{n+1}$ quiver
\[\begin{tikzcd}[column sep=15pt]
1\arrow{r}&2\arrow{r}&\cdots\arrow{r}&n+1
\end{tikzcd}
\]
modulo all paths of length $2$ except that with middle vertex $k+1$. It follows that
\[\gldim{\coshiftalg{k}}=\max\{n-k,k\}.\]
\end{exa}

\begin{exa} It can happen that the dominant dimension of a shifted algebra is again positive, allowing us to iterate sequences of shifts and coshifts. We illustrate this on the Auslander algebra
\[\Gamma=\mathord{\begin{tikzcd}[column sep=15pt]
&&\bullet\arrow{dr}\\
&\bullet\arrow{ur}\arrow[dash,dotted]{rr}\arrow{dr}&&\bullet\arrow{dr}\\
\bullet\arrow{ur}\arrow[dash,dotted]{rr}&&\bullet\arrow{ur}\arrow[dash,dotted]{rr}&&\bullet
\end{tikzcd}}\]
of the path algebra of a linearly oriented $\mathsf{A}_3$ quiver. We may compute that the first shifted algebra is
\[\shiftalg{1}=\mathord{\begin{tikzcd}[column sep=15pt]
&&\bullet\arrow{dr}\\
&\bullet\arrow{ur}\arrow[dash,dotted]{rr}\arrow{dr}&&\bullet\arrow{dr}\\
\bullet\arrow{ur}&&\bullet\arrow{ur}&&\bullet
\end{tikzcd}}\]
(noting the absence of relations in the lowest row) and then use Theorem~\ref{dAG-algebras} to see that $\coshiftalg{1}\cong\shiftalg{1}$, $\shiftalg{2}\cong\coshiftalg{0}\cong\Gamma$ and $\coshiftalg{2}\cong\shiftalg{0}\cong\Gamma$.
Since $\domdim B_1 =1 $, we can shift again to obtain
\[\shiftalg{1,1}=\mathord{\begin{tikzcd}
\bullet\arrow{r}&\bullet\arrow{r}&\bullet\arrow{r}&\bullet\\
\bullet\arrow{u}\arrow{r}\arrow[dash,dotted]{ur}&\bullet\arrow{u}
\end{tikzcd}}\]
which also has dominant dimension $1$; note in particular that $\shiftalg{1,1}\not\cong\shiftalg{2}$. Shifting once more, we find
\[\shiftalg{1,1,1}=\mathord{\begin{tikzcd}
\bullet\arrow{r}&\bullet\arrow{r}&\bullet\arrow{r}&\bullet\arrow{r}&\bullet\\
&\bullet\arrow{u}\end{tikzcd}}\]
which has dominant dimension $0$, so the sequence ends.
\end{exa}

\section*{Acknowledgements}
We would like to thank William Crawley-Boevey, Sondre Kvamme, Greg Stevenson and the anonymous referees for helpful comments. This project began while the first author visited Universit\"at Bielefeld with funding from the Deutsche Forschungsgemeinschaft grant SFB 701---he thanks Henning Krause and the rest of the Bielefeld representation theory group for their hospitality. Subsequently, he was supported by a postdoctoral fellowship from the Max-Planck-Gesellschaft.
The second author is supported by the Alexander von Humboldt-Stiftung in the framework of an Alexander von Humboldt Professorship endowed by the Federal Ministry of Education and Research.

\bibliographystyle{amsalpha}
\bibliography{ShiftedModules-MT}

\end{document}